\DeclareMathAlphabet{\curly}{U}{rsfs}{m}{n}
\newtheorem{thm}{Theorem}[section]
\newtheorem{lem}[thm]{Lemma}
\newtheorem{prop}[thm]{Proposition}
\newtheorem*{theorem*}{Theorem}
\theoremstyle{remark}\newtheorem*{remark}{Remark}
\numberwithin{equation}{section}
\newcommand{\1}{\mathbf{1}}
\newcommand{\leg}[2]{\genfrac{(}{)}{}{}{#1}{#2}}
\newcommand\Li{\mathrm{Li}}
\newcommand\F{\mathbb{F}}
\newcommand\Z{\mathbb{Z}}
\newcommand\N{\mathbb{N}}
\newcommand\Q{\mathbb{Q}}
\newcommand\lcm{\mathrm{lcm}}
\newcommand\Gal{\mathrm{Gal}}
\newcommand\A{\curly{A}}
\newcommand\C{\mathcal{C}}
\newcommand\Ll{\curly{L}}
\newcommand\Pp{\curly{P}}
\newcommand\Qq{\curly{Q}}
\newcommand\E{\mathbb{E}}
\newcommand\Prob{\mathbb{P}}
\newcommand{\legq}[2]{\genfrac{[}{]}{}{}{#1}{#2}}
\renewcommand{\phi}{\varphi}
\newcommand{\rad}{\mathrm{rad}}
\renewcommand{\pod}[1]{\mathchoice
  {\allowbreak \if@display \mkern 18mu\else \mkern 8mu\fi (#1)}
  {\allowbreak \if@display \mkern 18mu\else \mkern 8mu\fi (#1)}
  {\mkern4mu(#1)}
  {\mkern4mu(#1)}
}
\newcommand{\subjclass}[2][2010]{%
  \let\@oldtitle\@title%
  \gdef\@title{\@oldtitle\footnotetext{#1 \emph{Mathematics subject classification.} #2}}%
}
\newcommand{\keywords}[1]{%
  \let\@@oldtitle\@title%
  \gdef\@title{\@@oldtitle\footnotetext{\emph{Key words and phrases.} #1.}}%
}
\begin{document}
\title{Bounded gaps between primes with a given primitive root, II}

\author{Roger C. Baker%
  \thanks{email: \texttt{baker@math.byu.edu}}}
\affil{Department of Mathematics\\Brigham Young University\\Provo, UT 84602, USA}

\author{Paul Pollack%
  \thanks{email: \texttt{pollack@math.uga.edu}}}
\affil{Department of Mathematics\\ University of Georgia\\Athens, GA 30602, USA}

\date{}
%\address{Department of Mathematics\\Brigham Young University\\Provo, UT 84602, USA}
%\email{baker@math.byu.edu}
%\address{Department of Mathematics\\University of Georgia\\Athens, GA 30602, USA}
%\email{pollack@uga.edu}

\subjclass{Primary: 11A07, Secondary: 11G05}
\keywords{Artin's conjecture, bounded gaps, elliptic Artin, Maynard--Tao theorem, primitive root}
\maketitle

\begin{abstract}\noindent Let $m$ be a natural number, and let $\Qq$ be a set containing at least $\exp(C m)$ primes. We show that one can find infinitely many strings of $m$ consecutive primes each of which has some $q\in\Qq$ as a primitive root, all lying in an interval of length $O_{\Qq}(\exp(C'm))$. This is a bounded gaps variant of a theorem of Gupta and Ram Murty. We also prove a result on an elliptic analogue of Artin's conjecture. Let $E/\Q$ be an elliptic curve with an irrational $2$-torsion point. Assume GRH. Then for every $m$, there are infinitely many strings of $m$ consecutive primes $p$ for which $E(\F_p)$ is cyclic, all lying an interval of length $O_E(\exp(C'' m))$. If $E$ has CM, then the  GRH assumption can be removed. Here $C$, $C'$, and $C''$ are absolute constants.
\end{abstract}

\section{Introduction}
In 1927, Artin proposed the following conjecture: \emph{If $g$ is not a square and $g\neq -1$, then there are infinitely many primes $p$ for which $g$ is a primitive root modulo $p$.} Artin's conjecture remains unsolved, but investigations in this direction  have led to many deep and beautiful results (see \cite{moree12}).

In 1967, Hooley \cite{hooley67} showed that Artin's conjecture is a consequence of the Generalized Riemann Hypothesis for Dedekind zeta functions (hereafter GRH). In \cite{pollack14}, it was shown how Hooley's proof could be merged with the method of Maynard--Tao for producing bounded gaps between primes: \emph{On GRH, for every nonsquare $g\neq -1$ and every $m$, there are infinitely many runs of $m$ consecutive primes all possessing $g$ as a primitive root and lying in an interval of length $O_m(1)$.}

There is not a single $g$ for which the conclusion of Artin's conjecture is known to hold unconditionally. However, in 1984 Gupta and Ram Murty \cite{GM84} described how to produce many finite sets of integers some member of which satisfies Artin's conjecture. Their method was refined by Ram Murty and Srinivasan \cite{MS87}, Gupta, Ram Murty, and Kumar Murty \cite{GMM87}, and by Heath-Brown \cite{HB86}. It follows from the results in this last paper that Artin's conjecture holds for at least one $g \in\{2, 3, 5\}$. We prove a result in this direction where the primes produced are consecutive and contained in an interval of bounded length.

Recall that nonzero $q_1, \dots, q_r\in \Z$ are said to be \emph{multiplicatively independent} if $q_1^{e_1} \cdots q_r^{e_r}=1$ in integers $e_1, \dots, e_r$ only when $e_1 = \dots = e_r = 0$.

\begin{thm}\label{thm:main} Let $\Qq$ be a set of $r$ multiplicatively independent integers. Assume that the elements $q_1, \dots, q_r$ of $\Qq$ satisfy the following technical condition:
\[\tag{*} \parbox{0.8\linewidth}{If $e_0, e_1, \dots, e_r $ are nonnegative integers for which $(-3)^{e_0} q_1^{e_1}\cdots q_r^{e_r}$ is a square, then $\sum_{i=0}^{r} e_i$ is even.}\]
Let $m$ be a natural number. If $r \ge \exp(C m)$, then there are infinitely many runs of $m$ consecutive primes $p_1 < \dots < p_m$ all of which possess some element of $\Qq$ as a primitive root, where also
\[ p_m - p_1 \leq \mathfrak{f}(\Q(\sqrt{q_1}, \dots, \sqrt{q_r})/\Q) \cdot \exp(C'm).\]
Here $C$ and $C'$ are (positive) absolute constants, and $\mathfrak{f}(K/\Q)$ denotes the conductor of the abelian extension $K/\Q$.
\end{thm}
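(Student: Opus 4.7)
My plan is to combine the Maynard--Tao higher-dimensional sieve with an unconditional density argument in the style of Gupta--Murty and Heath-Brown. Set $k = \lfloor \exp(C_0 m)\rfloor$ for a sufficiently large absolute constant $C_0$, and select an admissible $k$-tuple $\Hh = (h_1,\dots,h_k)$ with $h_k \ll \mathfrak{f}(\Q(\sqrt{q_1},\dots,\sqrt{q_r})/\Q) \cdot \exp(C_0 m)$ (the conductor enters because the $h_i$ must avoid residue classes forced by the Kummer--cyclotomic data below). Take $W$ to be the conductor times the product of all primes up to a slowly growing threshold, pick a residue class $\nu \pmod{W}$ with $(\nu+h_i, W)=1$ for each $i$, and attach Maynard weights $w_n$ supported on $n \in [N,2N]$ with $n \equiv \nu \pmod{W}$. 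The aim is then to prove that
\[
\sum_{n}\left(\sum_{i=1}^{k} \1_{\mathcal{P}}(n+h_i) - (m-1)\right) w_n > 0
\]
for all large $N$, where $\mathcal{P}$ denotes the set of primes having at least one $q \in \Qq$ as a primitive root; positivity produces $m$ such primes in a window of length $\leq h_k$. A further standard modification --- sieving out primes \emph{not} in $\mathcal{P}$ at the remaining slots, in the spirit of Pintz's treatment of consecutive primes --- then upgrades this to $m$ \emph{consecutive} primes in $\mathcal{P}$.

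The central unconditional ingredient is a pair of Bombieri--Vinogradov-type estimates for $\mathcal{P}$: a positive-density lower bound
\[
\#\{p \leq x :\, p \equiv a \pmod{q},\; p \in \mathcal{P}\} \geq \delta \cdot \frac{x}{\phi(q)\log x}
\]
for $(a,q)=1$, $W \mid q$, $q \leq x^{\theta}$, together with control of the associated error on average in $q$ up to level $x^{\theta}$. To obtain the lower bound I would adapt the Gupta--Murty--Heath-Brown method: a prime $p$ lies outside $\mathcal{P}$ iff, for every $q_i \in \Qq$, there is a prime $\ell_i \mid p-1$ for which $q_i$ is an $\ell_i$-th power modulo $p$. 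Partitioning the prime divisors of $p-1$ into small, medium, and large ranges, I would control the three ranges by Brun's sieve, unconditional Chebotarev applied to $\Q(\zeta_\ell, \sqrt[\ell]{q_i})$, and the large sieve, respectively. Condition (*) ensures that the Kummer tower $\Q(\sqrt{q_1},\dots,\sqrt{q_r})$ is linearly disjoint from $\Q(\sqrt{-3})$, and hence from the relevant subfield of $\Q(\zeta_\ell)$, so no entanglement loss enters the Chebotarev counts. The hypothesis $r \geq \exp(Cm)$ is used to amplify the individual small contributions from the different $q_i$ --- playing the role of the ``independence'' of the events ``$q_i$ is a primitive root modulo $p$'' --- into a uniform lower bound $\delta>0$ that does not depend on $\Qq$.

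Feeding these two inputs into Maynard's sieve and carrying out the usual simplex computation, one gets $S_2/S_1 \geq c\delta \log k$ for an absolute constant $c>0$; choosing $C_0$ large makes the right-hand side exceed $m$, whence positivity of the main sum follows. The hard part will be the density step: without an Artin-type constant at our disposal, the amplification across $\Qq$ in the Heath-Brown trichotomy must be carried out quantitatively, uniformly in the modulus $q$, and then propagated through the Bombieri--Vinogradov-style averaging demanded by the Maynard sieve. The exponential threshold $r \geq \exp(Cm)$ is dictated precisely by the arithmetic of this chain: $r$ must be large enough as a function of $m$ for the uniform lower bound $\delta$ to beat the $1/\log k$ losses inherent to a $k$-tuple of size $\exp(C_0 m)$.
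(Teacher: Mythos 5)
There is a genuine gap, and it sits exactly where you predicted: the ``density step.'' The two inputs you take as central --- a uniform positive-density lower bound $\#\{p\le x: p\equiv a\pmod{q},\ p\in\mathcal{P}\}\ge \delta x/(\phi(q)\log x)$ for the set $\mathcal{P}$ of primes with a primitive root in $\Qq$, together with Bombieri--Vinogradov-level control of its error --- are not available unconditionally, and the trichotomy you sketch does not produce them. The medium range of primes $\ell\mid p-1$, which you propose to handle by ``unconditional Chebotarev applied to $\Q(\zeta_\ell,\sqrt[\ell]{q_i})$,'' is precisely the step in Hooley's argument that requires GRH: the unconditional error terms for fields of degree $\ell(\ell-1)$ are nowhere near summable over $\ell$ in any useful range. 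Gupta--Murty and Heath-Brown avoid the medium range altogether by applying a lower-bound sieve to force $(p-1)/T$ to have only large prime factors; that sieve costs a factor of $\log x$, so their unconditional conclusion is a count $\gg x/\log^2 x$, i.e.\ a relative-density-\emph{zero} subset of the primes. Consequently no $\delta>0$ of the kind your simplex computation $S_2/S_1\ge c\delta\log k>m$ requires is known, and feeding $\mathcal{P}$ into Maynard's machinery as a positive-density subset of the primes cannot be made to work as stated.

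The fix --- and what the paper actually does --- is to collapse your two-step plan into one sieve. Rather than first proving a density theorem for $\mathcal{P}$ and then running Maynard on $\mathcal{P}$, one runs Maynard on the full set of primes in a residue class $u\bmod v$ chosen (via Heath-Brown's construction, which is where (*) enters: it guarantees a prime $p_0$ with $\leg{-3}{p_0}=\leg{q_i}{p_0}=-1$ for all $i$) so that every $q_i$ is a quadratic nonresidue of each prime produced. Crucially, the admissible tuple consists of $2\kappa$ forms: the $L_i(n)=vn+a_i$ \emph{and} the companions $\tilde{L}_i(n)=(L_i(n)-1)/T$. Maynard's Proposition 6.1 then shows that, with positive probability under his weights, at least $m$ of the $L_i(n)$ are prime while simultaneously every $\tilde{L}_i(n)$ is free of prime factors below $x^{c/(k^3\log k)}$ --- the almost-primality of $p-1$ is imposed by the weight itself (the Li--Pan/Pintz observation), not established beforehand as a density statement. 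For such a prime $p=L_i(n)$ failing the primitive root condition, each $q\in\Qq$ is an $s_q$-th power for some of the $\le t\asymp k^3\log k$ large primes $s_q\mid(p-1)/T$; since $r\ge\exp(Cm)>(t-1)t$, pigeonhole yields $t$ of the $q$'s sharing one $s$, so they generate a subgroup of $\F_p^\times$ of order $\le x^{1-1/t}$, and the elementary Murty--Srinivasan lemma confines $p$ to a set of size $x^{1-1/t^2}$ --- negligible against the pointwise bound on the probability mass. This replacement of Chebotarev-plus-large-sieve by joint sieving plus an elementary subgroup count is the essential idea your proposal is missing.
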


\begin{remark} Of course, (*) holds whenever $q_1, \dots, q_r$ are distinct (positive) primes.\end{remark}

The techniques used to attack Artin's conjecture can also be used to answer statistical questions about reductions of elliptic curves. Here the general setup is as follows:
Let $E/\Q$ be an elliptic curve. For all but finitely many primes $p$, one can reduce $E$ mod $p$ to obtain an elliptic curve defined over $\F_p$. What can one say about the structure of the group $E(\F_p)$ as $p$ varies? It is known that $E(\F_p)$ is always generated by two elements, and so it is particularly natural to ask when one suffices. In other words, how often is $E(\F_p)$ a cyclic group?

If all of the $2$-torsion of $E$ is defined over $\Q$, then $(\Z/2\Z)^2$ sits inside $E(\Q)$, and so $E(\F_p)$ is cyclic for at most finitely many primes $p$. So assume $E$ has an irrational $2$-torsion point. Assuming GRH, Serre showed that there are infinitely many primes $p$ with $E(\F_p)$ cyclic, using Hooley's approach \cite{hooley67} to Artin's conjecture. In fact, Serre \cite{serre79} obtained an asymptotic formula for the number of such $p \leq x$, as $x \to \infty$. Ram Murty \cite{murty83} showed that when $E$ has CM, Serre's asymptotic formula can be proved unconditionally; a simpler argument for the same conclusion has been given by Cojocaru \cite{cojocaru03}. See \cite{CM04} and \cite{AM10} for investigations into the size of the error term in Serre's formula.

We prove the following bounded gaps result.

\begin{thm}\label{thm:EC} Let $E/\Q$ be an elliptic curve with an irrational $2$-torsion point. Let $m$ be a natural number. If GRH holds, then there are infinitely many runs of $m$ consecutive primes $p_1 < p_2 < \dots < p_m$ for which $E(\F_p)$ is cyclic, where
\[ p_m-p_1 \leq \rad(\Delta_{E}) \cdot \exp(C'' m). \]
Here $\rad(\Delta_{E})$ is the product of the primes of bad reduction, and $C''$ is an absolute constant. If $E$ has $CM$, then the GRH assumption can be removed.
\end{thm}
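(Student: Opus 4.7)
The plan is to run the Maynard--Tao sieve on an admissible tuple, mirroring the strategy for Theorem~\ref{thm:main} but replacing the primitive-root condition with the cyclicity condition. My starting point is Serre's criterion: for a prime $p$ of good reduction for $E$, the group $E(\F_p)$ is cyclic if and only if $p$ does not split completely in any division field $\Q(E[\ell])$ ($\ell$ prime). By M\"obius inversion,
\[
\1[E(\F_p) \text{ cyclic}] = \sum_{d\text{ squarefree}} \mu(d)\,\1[p \text{ splits completely in } \Q(E[d])].
\]
Only $d \le \sqrt{p}+1$ can contribute, since $E[d] \subseteq E(\F_p)$ forces $d^2 \mid \#E(\F_p)$ and Hasse gives $\#E(\F_p) \le (\sqrt{p}+1)^2$.

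Next I fix an admissible $k$-tuple $(h_1,\dots,h_k) \subset [0,H]$ with $H = \rad(\Delta_E)\cdot \exp(C''m)$ and $k$ of size $\exp(Cm)$ large enough for Maynard's theorem to produce, with the GPY-style weights $w_n$, at least $m$ primes among $\{n+h_1,\dots,n+h_k\}$ for a positive proportion of $n \le N$. Admissibility can be arranged while forcing each $n+h_i$ to be coprime to $\rad(\Delta_E)$, which accounts for the $\rad(\Delta_E)$ factor in the conclusion. It then suffices to show that further restricting to $n$ for which every prime shift $n+h_i$ satisfies ``$E(\F_{n+h_i})$ cyclic'' discards only a negligible fraction of the Maynard--Tao mass; concretely, for each $i$, I would bound
\[
\sum_{n\sim N} w_n \sum_{\ell\text{ prime}} \1[n+h_i \text{ prime and splits completely in } \Q(E[\ell])]
\]
by $o(1)$ times the usual Maynard--Tao main term.

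I would split the sum over $\ell$ into three ranges. For $\ell \le \ell_0$ with $\ell_0$ a large absolute constant, Chebotarev gives that each $\ell$ contributes density at most $1/[\Q(E[\ell]):\Q]$; since $[\Q(E[\ell]):\Q] \gg \ell^4$ in the non-CM case and $\gg \ell^2$ in the CM case, the tail $\sum_{\ell>\ell_0}[\Q(E[\ell]):\Q]^{-1}$ is arbitrarily small. For medium $\ell$, up to a small power of $N$, I invoke an effective form of Chebotarev in $\Q(E[\ell])/\Q$: under GRH this is the Lagarias--Odlyzko bound, while in the CM case $K(E[\ell])$ is abelian over the CM field $K$, so the associated $L$-functions are Hecke $L$-functions and classical unconditional estimates apply, the conductor dependence being absorbed by $\rad(\Delta_E)\cdot\ell$. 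For large $\ell$ (close to $\sqrt{p}$), one uses that $E[\ell] \subseteq E(\F_p)$ forces $\ell^2 \mid \#E(\F_p)$ and controls the count via a Brun--Titchmarsh style argument, exactly as in \cite{serre79,cojocaru03}.

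The main obstacle I anticipate is the medium-$\ell$ estimate: the effective Chebotarev error terms, once summed over $\ell$ up to a small power of $N$ and inserted into the GPY-weighted sum over $n\sim N$, must still be beaten by the Maynard--Tao main term. The non-CM case unavoidably requires GRH because $\Gal(\Q(E[\ell])/\Q)$ is nonabelian; the CM case goes through unconditionally precisely because the CM field sitting inside every division field turns the relevant Chebotarev problem into a statement about abelian $L$-functions over $K$, where the classical zero-free region suffices.
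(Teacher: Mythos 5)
Your outline for the medium and very large ranges of $\ell$ tracks the paper (GRH--Chebotarev for medium $\ell$, an elementary divisibility argument for $\ell$ near $\sqrt{p}$), but there is a genuine gap at small $\ell$, and it is exactly where the paper's main new device lives. For a \emph{fixed} prime $\ell$, the primes splitting completely in $\Q(E[\ell])$ have density $1/[\Q(E[\ell]):\Q]$, which is a constant --- as large as $1/2$ for $\ell=2$, and possibly again $1/2$ for $\ell=3$ (since $\Q(E[3])$ can equal $\Q(\zeta_3)$). So discarding these primes does \emph{not} remove a ``negligible fraction of the Maynard--Tao mass'': the sieve only detects a $\asymp (\log k)/k$ proportion of $n$ with $L(n)$ prime, and for all you know every one of those primes splits completely in $\Q(E[2])$ or $\Q(E[3])$. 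A union bound over $\ell$ cannot close this, because $\sum_{\ell} 1/[\Q(E[\ell]):\Q]$ need not be less than $1$; Serre's proof avoids this via a full inclusion--exclusion over the composita $\Q(E[d])$, which you invoke (``M\"obius inversion'') but never actually use, and which in any case does not obviously combine with the Maynard weights. The paper sidesteps all of this with two structural tricks you are missing: (a) the linear forms $L_i(n)=vn+a_i$ are placed in a congruence class forcing every prime value to be \emph{inert} in a quadratic or abelian cubic subfield $M$ of $\Q(E[2])$ (Lemma \ref{lem:HB2}), killing $\ell=2$ outright; and (b) the companion forms $\tilde{L}_i(n)=(L_i(n)-1)/T$ are carried along in the admissible family and forced to be almost prime, so that $p-1$ has no odd prime factor below $x^{c/(k^3\log k)}$; since the Weil pairing gives $\ell \mid p-1$ whenever $p$ splits completely in $\Q(E[\ell])$, this eliminates every $\ell$ with $2<\ell<x^{c/(k^3\log k)}$ for free. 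Without (a) and (b) your argument does not get started.

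Separately, your unconditional CM argument is both harder than necessary and inherits the same small-$\ell$ gap. The paper does not run Chebotarev over the division fields at all in the CM case: it restricts to supersingular primes by imposing inertness in the CM field $F$ (a congruence condition), notes that for supersingular $p\ge 5$ one has $\#E(\F_p)=p+1$, so $\ell^2\mid p+1$ together with $\ell\mid p-1$ forces $\ell=2$, and then the whole theorem reduces to finding $m$ consecutive primes in a single arithmetic progression mod $\mathfrak{f}(F/\Q)\mathfrak{f}(M/\Q)$, which is the Banks--Freiberg--Turnage-Butterbaugh theorem. If you want to salvage your route, you should adopt the paper's forms $L_i,\tilde{L}_i$ and reserve the analytic (GRH) input solely for $\ell\in[x^{c/(k^3\log k)},x^{1/3}]$.
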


The CM case of Theorem \ref{thm:EC} is particularly easy because of the abundance of supersingular primes. According to a criterion of Deuring (see, e.g.,  \cite[Theorem 12, p. 182]{lang87}), a prime $p$ of good reduction is supersingular precisely when there is a unique prime in the CM field lying above $p$. As we explain below, this implies that $\E(\F_p)$ is cyclic for all primes from a certain arithmetic progression. This allows us to appeal to a recent theorem of Banks--Freiberg--Turnage-Butterbaugh \cite{BFTB14} about long runs of such primes in short intervals.

It is perhaps slightly unsettling that we produce only supersingular primes in the CM case. In general, this is unavoidable. For instance, consider the curve $E$ given by $y^2=x^3+x$, whose $2$-torsion points are defined over $\Q(i)$. Since $E$ has CM by $\Z[i]$, Deuring's criterion tells us that a prime $p$ of good ordinary reduction splits in $\Q(i)$, and so $E(\F_p)$ contains $(\Z/2\Z)^2$ for all such $p$. Our final theorem says that \emph{if} there are infinitely many $p$ of good ordinary reduction with $E(\F_p)$ cyclic, then the set of these $p$ has bounded gaps.

\begin{thm}\label{thm:CM} Let $E/\Q$ be a CM elliptic curve. Assume that there are infinitely many primes $p$ of good ordinary reduction for which $E(\F_p)$ is cyclic. Then there are infinitely many tuples of $m$ such primes $p_1 <\dots < p_m$ with $p_m-p_1 \ll \exp(O_E(m))$.
\end{thm}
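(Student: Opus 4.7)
The plan is to apply the Maynard--Tao sieve to the set $\Pp_E$ of primes of good ordinary reduction at which $E(\F_p)$ is cyclic. The key simplification, compared with the non-CM case of Theorem~\ref{thm:EC}, is that CM theory reparameterises $\Pp_E$ as a set of primes in the CM field subject to a coprimality side condition. Explicitly, let $K$ be the CM field of $E$ and $\mathcal{O}$ its endomorphism ring. Deuring's criterion says that a prime $p$ of good reduction is ordinary iff $p$ splits in $K$; writing $p = \pi \bar{\pi}$ with $\pi \in \mathcal{O}$ a prime element, the reduction satisfies $E(\F_p) \cong \mathcal{O}/(\pi - 1)\mathcal{O}$. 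Since $\mathcal{O}$ is a free $\Z$-module of rank $2$, this quotient is cyclic exactly when $\pi - 1$ is \emph{primitive} in $\mathcal{O}$, i.e.\ not divisible by any rational integer $> 1$.

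Before setting up the sieve I would record that the hypothesis of the theorem forces a positive density of such primes. For CM curves there is an unconditional asymptotic
\[ \#\{p \leq x : p \in \Pp_E\} \sim c_E\, x/\log x, \]
due to Ram Murty \cite{murty83} and Cojocaru \cite{cojocaru03}; the constant $c_E$ is a convergent Euler product that vanishes only in the presence of a local obstruction of the kind arising for $y^2 = x^3 + x$. The hypothesis therefore guarantees $c_E > 0$, so $\Pp_E$ has positive relative density among primes.

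With this in hand I would run the Maynard--Tao machinery on an admissible $k$-tuple $(h_1, \dots, h_k) \subset \Z$ of diameter $O(k \log k)$, with $k = \lceil \exp(Cm)\rceil$. The weights are of the standard Maynard--Tao form, supported on $n$ in a suitable residue class modulo a modulus $W$ that absorbs the bad primes of $E$ and the conductor data of $K$, so that each $n + h_i$ is a priori eligible to split in $K$ and have $\pi - 1$ primitive. The primitivity of $\pi - 1$ is handled by the truncated inclusion--exclusion $\sum_{d \mid (\pi - 1) \text{ in }\mathcal{O},\, d \in \Z_{>1}} \mu(d)$, directly parallel to Hooley's treatment of the primitive root condition \cite{hooley67} as adapted in \cite{pollack14}. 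The distributional input is a Bombieri--Vinogradov theorem for primes of $K$ in residue classes modulo ideals of $\mathcal{O}_K$, which holds unconditionally in this imaginary quadratic setting. Positivity of $c_E$ ensures that the main term of the Maynard--Tao functional survives the truncated primitivity sieve, and one obtains infinitely many $n$ for which at least $m$ of the shifts $n + h_i$ lie in $\Pp_E$; this yields the $m$-tuple in an interval of length $\leq \max_i h_i - \min_i h_i \ll \exp(O_E(m))$.

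The hardest part will be the precise interaction between the Maynard--Tao weights over $\Z$ and the sieve for primitivity of $\pi - 1$ in $\mathcal{O}$: one must verify that truncating the inclusion--exclusion does not disturb the dominant Maynard--Tao term while all resulting error terms stay controlled at the available level of distribution. This is exactly the step carried out for the Artin-conjecture problem in \cite{pollack14}, and the CM hypothesis actually simplifies matters here by making the relevant $\ell$-division-field towers abelian over $K$, so that Bombieri--Vinogradov over $K$ can stand in for the GRH input used in \cite{pollack14}.
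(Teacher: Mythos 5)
Your overall strategy --- running the Maynard--Tao sieve directly on the set of ordinary cyclic primes, with a Hooley-style truncated inclusion--exclusion for the primitivity of $\pi-1$ built into the weights --- is genuinely different from the paper's, and as written it has two gaps. The first concerns the large-modulus tail of the inclusion--exclusion. You defer ``the hardest part'' to the analysis in \cite{pollack14}, but that step is carried out there under GRH: a Bombieri--Vinogradov input (over $\Q$ or over the CM field) only controls the splitting conditions in $K_\ell$ for $\ell$ up to a small power of $x$, far short of the range $\ell \le \sqrt{p}+1$ that must be excluded. The unconditional substitute in the CM setting is not the abelianness of the division-field towers over the CM field but an elementary lattice-point bound: if $p$ splits completely in $K_\ell$, then some $\pi \in \Z_F$ with $N(\pi)=p$ satisfies $\pi \equiv 1 \pmod{\ell}$, and the number of such $\pi$ with $N(\pi)\le x$ is $O(x/\ell^2+1)$ (Murty \cite{murty83}, Cojocaru \cite{cojocaru03}); the quadratic decay in $\ell$ is what makes the tail summable without any hypothesis. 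You state the reparameterisation $E(\F_p)\cong \mathcal{O}/(\pi-1)\mathcal{O}$ at the outset but never invoke this counting input where it is needed, so the error terms for large $\ell$ are not controlled by anything you cite.

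The second gap is the positive-density step. The unconditional asymptotic of Murty and Cojocaru counts \emph{all} primes with $E(\F_p)$ cyclic, supersingular ones included, and for a CM curve the supersingular primes alone can carry the entire main term (as happens for $y^2=x^3+x$); so $c_E>0$ does not give a positive density of \emph{ordinary} cyclic primes, which is what your main term requires. One would need the asymptotic for the ordinary part separately, together with an argument that mere infinitude forces the corresponding constant to be positive. The paper sidesteps both issues by never sieving for cyclicity inside the weights: it fixes one finite Galois extension $K$ (the compositum of the CM field $F$ with the $K_\ell$ for the finitely many $\ell \mid 2\Delta_F\Delta_E$), uses the hypothesis only to produce a single conjugacy class $\C$ compatible with ordinariness and cyclicity, and applies Maynard's machinery (via the Murty--Murty Bombieri--Vinogradov variant) to the Chebotarev set $\Pp(\C)$, whose positive density is automatic. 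The companion forms $\tilde L_i(n)=(L_i(n)-1)/2$ force $p-1$ to be almost prime, so any remaining $\ell$ with $p$ splitting completely in $K_\ell$ must exceed $x^{c/(k^3\log k)}$, and those few primes are discarded by the lattice-point count above. If you incorporate that count into your tail estimate and replace the density claim with such a fixed-level Chebotarev condition, your approach should go through; as written, both steps are unsupported.
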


\noindent Unfortunately, the method of proof of Theorem \ref{thm:CM} does not allow us to impose the condition that the primes produced here are consecutive.

It would be desirable to remove the GRH assumption altogether from Theorem \ref{thm:EC}. We note that in \cite{GM90}, Gupta and Ram Murty showed unconditionally that if $E$ has an irrational $2$-torsion point, then there are always infinitely many primes $p$ with $E(\F_p)$ cyclic (but they do not get the order of magnitude for the count predicted by Serre's asymptotic formula). Their proof relies on a sieve result seemingly unavailable in our context.

\subsection*{Notation} The letters $\ell$ and $p$ are reserved for primes. We write $p^{-}(n)$ for the smallest prime factor of $n$, with the convention that $p^{-}(1)=\infty$. We use $\rad(n)$ to denote the largest squarefree divisor of $n$.  We use $C_1, C_2, \dots$ for absolute positive constants that are be thought of as large. If $F$ is a number field, $\Z_F$ denotes its ring of integers, and we write $\Delta_F$ for the absolute discriminant of $F$. If $E$ is an elliptic curve defined over $\Q$, we let $\Delta_E$ denote the minimal discriminant of $E/\Q$.  We write $\Prob(\cdot)$ for the probability of an event and $\E[\cdot]$ for the expectation of a random variable.

\section{Preliminaries for the proof of Theorem \ref{thm:main}}

If $q_1, \dots, q_r \in \Z$ and $p\nmid q_1 \cdots q_r$, we write $\langle q_1, \dots, q_r\bmod{p}\rangle$ for the subgroup of $\F_p^{\times}$ generated by the mod $p$ reductions of the $q_i$. The next lemma is due to Ram Murty and Srinivasan \cite{MS87} (compare with \cite[Lemma 2]{GM84}).

\begin{lem}\label{lem:MS} Let $q_1, \dots, q_r$ be multiplicatively independent integers, and let $Y \ge 1$. The number of primes $p$ for which
\[ \#\langle q_1, \dots, q_r \bmod{p}\rangle \leq Y \]
is $O(Y^{1+ \frac{1}{r}})$. Here the implied constant may depend on the $q_i$.
\end{lem}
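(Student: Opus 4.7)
The plan is to attach to each relevant prime $p$ a small nonzero integer that $p$ must divide, using Minkowski's theorem applied to the lattice of multiplicative relations. For a prime $p \nmid q_1 \cdots q_r$, define
\[ L_p := \{(e_1, \dots, e_r) \in \Z^r : q_1^{e_1} \cdots q_r^{e_r} \equiv 1 \pmod{p}\}. \]
The obvious map $(e_1,\dots,e_r) \mapsto \prod_i q_i^{e_i}$ induces an isomorphism $\Z^r/L_p \cong \langle q_1, \dots, q_r \bmod{p}\rangle$, so the hypothesis forces $\mathrm{covol}(L_p) = [\Z^r : L_p] \leq Y$. Applying Minkowski's first theorem to the closed cube $[-Y^{1/r}, Y^{1/r}]^r$, which has volume $2^r Y \ge 2^r \,\mathrm{covol}(L_p)$, I would obtain a nonzero lattice vector $\mathbf{e} = (e_1,\dots,e_r) \in L_p$ with $\max_i |e_i| \leq Y^{1/r}$.

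Splitting $e_i = e_i^+ - e_i^-$ with $e_i^\pm \ge 0$, I would then form the integer
\[ N_{\mathbf{e}} := \prod_{i=1}^{r} q_i^{e_i^+} - \prod_{i=1}^{r} q_i^{e_i^-}. \]
By construction $p \mid N_{\mathbf{e}}$, and the key point is that $N_{\mathbf{e}} \neq 0$: its vanishing would give $\prod_i q_i^{e_i} = 1$ in $\Q$, and multiplicative independence of $q_1, \dots, q_r$ would force $\mathbf{e} = \mathbf{0}$, a contradiction. This is the only step where the multiplicative independence hypothesis enters the argument.

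To count, I would use that $|N_{\mathbf{e}}| \leq 2\prod_i |q_i|^{Y^{1/r}} = \exp(O(Y^{1/r}))$, with an implied constant depending on $q_1, \dots, q_r$; hence $N_{\mathbf{e}}$ has at most $O(Y^{1/r})$ distinct prime divisors. The number of eligible lattice points $\mathbf{e}$ with $\max_i |e_i| \leq Y^{1/r}$ is at most $(2\lfloor Y^{1/r}\rfloor+1)^r = O(Y)$, where again the constant depends on $r$ (and so on the $q_i$). Since each prime $p$ under consideration divides some $N_{\mathbf{e}}$ coming from such an $\mathbf{e}$, the total count is at most $O(Y \cdot Y^{1/r}) = O(Y^{1+1/r})$.

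The argument is quite direct, and I anticipate no serious obstacle beyond bookkeeping — in particular, one must invoke a closed version of Minkowski's theorem to accommodate the equality case in the volume bound, and carefully propagate the dependence on the $q_i$ through the estimates. The only conceptually significant input is the one-line use of multiplicative independence to rule out $N_{\mathbf{e}} = 0$; everything else is volume-counting.
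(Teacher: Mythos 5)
Your proof is correct and follows essentially the same route as the paper: both arguments produce a nonzero exponent vector $\mathbf{e}$ with $\max_i|e_i|\le Y^{1/r}$ giving a relation mod $p$, use multiplicative independence to see that the associated integer is nonzero, bound its number of prime factors by $O(Y^{1/r})$, and sum over the $O(Y)$ choices of $\mathbf{e}$. The only difference is that you invoke Minkowski's theorem on the relation lattice where the paper simply applies the pigeonhole principle to the $(\lfloor Y^{1/r}\rfloor+1)^r > Y$ products $q_1^{e_1}\cdots q_r^{e_r}$; the rest is identical.
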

\begin{proof} We include the short proof. Suppose that $\#\langle q_1, \dots, q_r \bmod{p}\rangle \leq Y$. By the pigeonhole principle, as $e_1, \dots, e_r$ run independently from $0$ through $\lfloor Y^{1/r}\rfloor$, two expressions of the form $q_1^{e_1} \cdots q_r^{e_r}$ must coincide mod $p$. Consequently, for some choice of integers $e_i'$ with each $|e_i'| \le Y^{1/r}$ and not all $e_i'=0$,
$p$ divides the numerator of the nonzero rational number $q_1^{e_1'} \cdots q_r^{e_r'}-1$. This numerator is (crudely) bounded above by $2\max\{|q_1|, \dots, |q_r|\}^{rY^{1/r}}$ and so has $O(Y^{1/r})$ prime divisors. Summing over the $O(Y)$ possibilities for the $e_i'$ completes the proof.
\end{proof}

The following lemma is used to construct an admissible collection of linear functions to which Maynard's machinery can be applied.

\begin{lem}\label{lem:HB} Let $q_1, \dots, q_r$ be nonzero integers satisfying {\normalfont\rmfamily{(*)}}. Let $v = 16 \prod_{\ell \mid q_1\cdots q_r,~\ell>2} \ell$. One can select an integer $u$ coprime to $v$ so that both of the following hold:
\begin{enumerate}
\item[\normalfont\rmfamily{(1)}] For every $p\equiv u\pmod{v}$, the Legendre symbols $\leg{q_1}{p}=\dots=\leg{q_r}{p}=-1$.
\item[\normalfont\rmfamily{(2)}] If $T$ is the largest power of $2$ dividing $u-1$, then $T \in \{2, 4, 8\}$, and $\gcd(\frac{u-1}{T},v)=1$.
    \end{enumerate}
\end{lem}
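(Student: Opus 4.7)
The plan is to reformulate conditions (1) and (2) entirely as a problem on the finite abelian group $G := (\Z/\mathrm{lcm}(v,3)\Z)^{\times}$. Since $v$ is divisible by $16$ and by every odd prime dividing $q_1 \cdots q_r$, quadratic reciprocity shows that each Legendre symbol $\leg{q_i}{\cdot}$ descends to a well-defined quadratic character $\chi_{q_i}$ on $G$, and $\chi_{-3}$ is the nontrivial character of conductor $3$ lifted to $G$. Condition (1) becomes $\chi_{q_i}(u) = -1$ for $i = 1, \dots, r$, while condition (2) becomes $u \not\equiv 1 \pmod{16}$ and $u \not\equiv 1 \pmod{\ell}$ for every odd prime $\ell \mid v$; any $u \in G$ meeting these may be reduced modulo $v$ to give the residue class required by the lemma.

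The main step is to produce $u \in G$ satisfying the \emph{augmented} character system $\chi_{-3}(u) = -1$ and $\chi_{q_i}(u) = -1$ for all $i$. Consider the homomorphism $\Phi := (\chi_{-3}, \chi_{q_1}, \dots, \chi_{q_r}) \colon G \to \{\pm 1\}^{r+1}$. By Pontryagin duality for finite abelian groups, $(-1, -1, \dots, -1)$ lies in $\mathrm{Im}(\Phi)$ precisely when every $(e_0, \dots, e_r) \in \{0,1\}^{r+1}$ for which $\chi_{-3}^{e_0} \prod_i \chi_{q_i}^{e_i}$ is trivial---equivalently, for which $(-3)^{e_0} q_1^{e_1} \cdots q_r^{e_r}$ is a rational square---also satisfies $\sum_i e_i \equiv 0 \pmod 2$. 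This is exactly hypothesis $(*)$.

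I would then handle the avoidance conditions via the CRT decomposition $G \cong (\Z/16\Z)^{\times} \times (\Z/3\Z)^{\times} \times \prod_{\ell \mid v,\, \ell > 3} (\Z/\ell\Z)^{\times}$. The standard reciprocity identity $\chi_{-3}(n) = \leg{n}{3}$ converts $\chi_{-3}(u) = -1$ into $u \equiv 2 \pmod 3$, disposing of the mod-$3$ avoidance automatically. For each odd prime $\ell \mid v$ with $\ell \geq 5$, the quotient $G/\ker(\Phi) \hookrightarrow \{\pm 1\}^{r+1}$ is an elementary abelian $2$-group while $(\Z/\ell\Z)^{\times}$ is cyclic of order $\ell - 1 > 2$; hence the projection of $\ker(\Phi)$ onto $(\Z/\ell\Z)^{\times}$ cannot be trivial, and the coset $u \ker(\Phi)$ contains elements not $\equiv 1 \pmod{\ell}$. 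The same reasoning applied to $(\Z/16\Z)^{\times}$, together with the presence of the non-identity square $9$, handles the condition $u \not\equiv 1 \pmod{16}$. Multiplying $u$ by a suitably chosen element of $\ker(\Phi)$ (which preserves every character value) and assembling the local choices via CRT produces the desired $u$.

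The principal obstacle is precisely the mod-$3$ component: because $(\Z/3\Z)^{\times} \setminus \{1\} = \{2\}$ is a singleton, avoidance of $1$ modulo $3$ cannot be decoupled from a specific Legendre value, and hypothesis $(*)$ with $e_0 = 1$ is exactly the ingredient that guarantees compatibility of the forced class $u \equiv 2 \pmod 3$ with the requirement $\chi_{q_i}(u) = -1$ for every $i$. Once this compatibility is secured, the remaining component-by-component assembly is routine, since each other CRT factor is either large enough to provide a plentiful supply of non-identity elements hitting any prescribed Legendre value, or (for $(\Z/16\Z)^{\times}$) admits the distinguished non-identity square $9$ on which all mod-$16$ quadratic characters vanish.
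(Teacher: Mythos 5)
Your proposal is correct, and it reaches the key intermediate statement by a genuinely different route. The paper (following Heath-Brown) produces an actual prime $p_0$ with $\leg{-3}{p_0}=\leg{q_1}{p_0}=\dots=\leg{q_r}{p_0}=-1$ by bounding the character sum $\sum_{p\le x}\left(1-\leg{-3}{p}\right)\prod_i\left(1-\leg{q_i}{p}\right)$ from below, with (*) guaranteeing that every principal term in the expansion carries a plus sign; this requires an equidistribution input about primes (Dirichlet's theorem for the nonprincipal quadratic characters). You observe that only a residue class is needed and obtain it by duality in the finite group $G=(\Z/\lcm(v,3)\Z)^{\times}$: the vector $(-1,\dots,-1)$ lies in the image of $(\chi_{-3},\chi_{q_1},\dots,\chi_{q_r})$ iff every relation $\chi_{-3}^{e_0}\prod_i\chi_{q_i}^{e_i}=1$ --- equivalently every square value of $(-3)^{e_0}\prod_i q_i^{e_i}$, using the standard fact that $\chi_n$ is nontrivial for nonsquare $n$ --- has $\sum e_i$ even, which is (*). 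This is the same orthogonality that drives the paper's character sum, but packaged algebraically, so no theorem about primes is needed. The local repairs are then essentially identical: the paper replaces $p_0$ by $4p_0$ at each bad odd prime $\ell$ and shifts by $8$ modulo $16$, while you multiply by a nontrivial square in the $\ell$-component (resp.\ by $9$ modulo $16$); in both proofs the prime $3$, where no nontrivial square is available, is handled by the forced congruence $u\equiv 2\pmod{3}$ coming from $\leg{-3}{\cdot}=-1$, which is exactly the role of $e_0$ in (*). One small caution: ``the projection of $\ker\Phi$ onto $(\Z/\ell\Z)^{\times}$ is nontrivial'' does not by itself let you adjust the $\ell$-component without disturbing the others; what you actually want (and clearly intend, given your closing paragraph) is that the nontrivial squares of $(\Z/\ell\Z)^{\times}$, embedded via CRT with $1$'s in the remaining components, already lie in $\ker\Phi$.
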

\begin{proof} For $r=3$, this lemma was proved by Heath-Brown. Since the argument for the general case is the same, we only outline the main steps, referring the reader to \cite[pp. 35--36]{HB86} for the details. By estimating $\sum_{p \le x}\left(1-\leg{-3}{p}\right)\prod_{i=1}^{r} \left(1-\leg{q_i}{p}\right)$ from below --- keeping (*) in mind ---  one shows that there are infinitely many primes $p$ with $\leg{-3}{p} = \leg{q_1}{p} = \dots = \leg{q_r}{p}=-1$. Fix one and call it $p_0$. For each odd prime $\ell$ dividing $q_1 \cdots q_r$, put
\[ u_{\ell} = \begin{cases} p_0 &\text{if $\ell \nmid p_0-1$,}\\
4p_0 &\text{otherwise},
\end{cases}
\quad\text{and put}\quad
u_2 = \begin{cases} p_0 &\text{if $16 \nmid p_0-1$}, \\
p_0-8 &\text{otherwise}.
\end{cases}
\]
Then for all odd primes $\ell \mid q_1\cdots q_r$, we see that $\ell \nmid u_{\ell}-1$. (We have used here that $p_0 \equiv -1\pmod{6}$, since $\leg{-3}{p}=-1$.) One checks that it suffices to choose $u$ as a solution to the simultaneous congruences
\[ u \equiv u_{\ell} \pmod{\ell}~\forall \text{ odd $\ell \mid q_1\dots q_r$} \quad\text{and} \quad u \equiv u_2 \pmod{16}. \qedhere \]
\end{proof}

Let $\Ll$ be a set of $k$ distinct linear functions, say $L_1(n) = a_1 n + b_1, \dots, L_k(n)= a_k n + b_k$, where each $a_i, b_i \in \Z$ and every $a_i>0$. We say that $\Ll$ is \emph{admissible} if for each prime $p$, there is some integer $n_p$ for which $p \nmid \prod_{i=1}^{k} L_i(n_p)$. Note that if each $(a_i, b_i)=1$, to check admissibility it suffices to check primes $p \leq k$.

\begin{lem}\label{lem:admissible} Let $q_1, \dots, q_r$ be nonzero integers satisfying {\normalfont\rmfamily{(*)}}, and let $u$ and $v$ be chosen as in Lemma \ref{lem:HB}. Let $\kappa$ be a natural number. There are integers $a_1 < \dots <  a_{\kappa}$, each congruent to $u\bmod{v}$, for which the $2\kappa$ linear functions
\begin{alignat*}{3} L_1(n) &= vn + a_1, &\quad \dots, \quad &L_{\kappa}(n) = vn + a_{\kappa}, \\
\tilde{L}_1(n) &= \frac{v}{T}n + \frac{a_1-1}{T}, &\quad \dots, \quad &\tilde{L}_{\kappa}(n) = \frac{v}{T} n + \frac{a_{\kappa}-1}{T} \end{alignat*}
make up an admissible family. Moreover, we can select the $a_{i}$ in such a way that
\[ a_{\kappa} - a_1 \leq v\cdot (2\kappa)^{C_1}. \]
\end{lem}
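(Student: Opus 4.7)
The plan is to split the admissibility check according to prime size, using a uniform witness $n_p \equiv 0 \pmod p$ for the small primes $p \le 2\kappa$ with $p \nmid v$, and then to construct the $a_i$ by a dimension-$2$ sieve so that this witness actually works.

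First I would dispose of the primes $p \mid v$ by showing the forms never vanish modulo $p$. For odd $p \mid v$: since $a_i \equiv u \pmod v$ with $\gcd(u, v) = 1$, one has $L_i(n) \equiv a_i \equiv u \not\equiv 0 \pmod p$; and because $p \mid v/T$ (as $p$ is odd and $\gcd(p, T) = 1$), $\tilde L_i(n) \equiv (a_i - 1)/T \equiv (u - 1)/T \not\equiv 0 \pmod p$ by condition~(2) of Lemma~\ref{lem:HB}. At $p = 2$, $L_i(n)$ is odd (since $u$ is), while $\tilde L_i(n) = (v/T) n + (a_i - 1)/T$ has an even leading coefficient (because $T \le 8$ yet $16 \mid v$) and an odd constant term (since $T$ is the exact power of $2$ dividing $u - 1$). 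So primes dividing $v$ impose no constraint on the $a_i$. Primes $p > 2\kappa$ are also free: the $2\kappa$ forms have at most $2\kappa < p$ roots modulo $p$, leaving some residue as a non-root.

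The real work is at the odd primes $p \le 2\kappa$ with $p \nmid v$. For $n_p \equiv 0 \pmod p$ to be a witness one needs $p \nmid a_i$ and $p \nmid (a_i - 1)/T$, and---using $\gcd(p, T) = 1$---these combine to the single local condition $\gcd(a_i(a_i - 1), P) = 1$, where
\[
P := \prod_{\substack{p \le 2\kappa \\ p \nmid v}} p.
\]
Writing $a = u + vk$ with $k \in [0, N)$, and using $\gcd(v, p) = 1$ for each $p \mid P$, every such $p$ excludes exactly two residues of $k \pmod p$, so the admissible $k$'s form a dimension-$2$ sieved set. Taking $N = (2\kappa)^{C_1}$ and invoking the Fundamental Lemma of the Sieve, I expect
\[
\#\{\,k \in [0, N) : \gcd((u + vk)(u + vk - 1),\, P) = 1\,\} \;\gg\; N \prod_{\substack{p \le 2\kappa \\ p \nmid v}} \Bigl(1 - \tfrac{2}{p}\Bigr) \;\gg\; \frac{N}{(\log 2\kappa)^{2}},
\]
with the second inequality following from Mertens' theorem, provided $C_1$ is an absolute constant large enough that the sifting ratio $\log N/\log(2\kappa) = C_1$ exceeds the Fundamental Lemma's threshold. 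Choosing $C_1$ so that the right-hand side exceeds $\kappa$ then yields $\kappa$ surviving values of $k$, and hence integers $a_1 < \dots < a_\kappa$ with $a_\kappa - a_1 \le vN = v(2\kappa)^{C_1}$. The finitely many small values of $\kappa$ can be dispatched by direct construction and absorbed into $C_1$.

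The main (but essentially routine) step is the sieve lower bound. With dimension $2$, sifting level $2\kappa$, and range $(2\kappa)^{C_1}$, the parameters sit comfortably in the Fundamental Lemma's region of validity once $C_1$ is large enough, so this amounts to invoking a standard sieve result rather than producing a delicate new estimate.
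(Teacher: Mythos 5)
Your proposal is correct and follows essentially the same route as the paper: a dimension-two application of the fundamental lemma of the sieve to the values $(vA+u)\bigl(\tfrac{v}{T}A+\tfrac{u-1}{T}\bigr)$ for $A$ in an interval of length $(2\kappa)^{C_1}$, with the witness $n_p=0$ for small primes. Your extra care in verifying that primes dividing $v$ impose no condition (via $\gcd(u,v)=1$, $\gcd(\tfrac{u-1}{T},v)=1$, and the parity of $\tfrac{u-1}{T}$) is implicit in the paper's argument and is a correct and welcome elaboration.
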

\begin{proof} By the fundamental lemma of the sieve, if $C_1$ is large enough, then the number of integers $A \in [0,(2\kappa)^{C_1}]$ for which $p^{-}((vA+u)(\frac{v}{T}A+\frac{u-1}{T})) > 2\kappa$ exceeds \[ \frac{1}{2} ((2\kappa)^{C_1}) \prod_{p \leq 2\kappa} (1-2/p).\]
%In applying the sieve, we have ignored primes $p$ dividing $v$, since these never divide an expression of the form $(vA+u)(\frac{v}{T}A + \frac{u-1}{T})$.
Increasing $C_1$ if necessary, this lower bound exceeds $\kappa$. Pick $\kappa$ of these integers, say $A_1 < \dots< A_{\kappa}$. The theorem follows upon choosing $a_i = vA_i + u$. Indeed, for primes $p \leq 2\kappa$, we have arranged matters so that $p \nmid \prod_{i=1}^{\kappa} L_{i}(0) \tilde{L}_{i}(0)$.
\end{proof}

\begin{remark} In the next section, we will show that all of the $\tilde{L}_i$ are almost primes at the same time that several of the ${L}_i$ are prime. A similar strategy appears in work of Li and Pan \cite{LP14}, who seem to have been the first to notice that the Maynard--Tao method can be applied with auxiliary `almost prime' conditions added. In the context of the earlier GPY method, this observation was made by Pintz \cite{pintz10}.
\end{remark}

\section{Proof of Theorem \ref{thm:main}}
The following key proposition is contained in recent work of Maynard \cite{maynard14d}.

\begin{prop}\label{prop:maynard} Fix an admissible family $\Ll$ of $k$ distinct linear functions, where $k \ge 2$. Suppose that $x$ is sufficiently large, $x > x_0(\Ll)$. There is a probability measure on
\[ \A(x):= \{n \in \Z: x \leq n < 2x\} \]
with all of the following properties:
\begin{enumerate}
\item[\normalfont\rmfamily{(1)}] The probability mass at any single $n \in \A(x)$ is \[ \ll x^{-1} (\log{x})^k \left(\prod_{i=1}^{k} \prod_{p \mid L_i(n)} 4\right) \exp(O(k\log{k})). \]
\item[\normalfont\rmfamily{(2)}] For each $L \in \Ll$,
\[ \Prob(L(n)\text{ is prime})\gg \frac{\log{k}}{k}. \]
\item[\normalfont\rmfamily{(3)}] Suppose that $\rho \in [k\frac{(\log\log{x})^2}{\log{x}}, \frac{1}{25}]$. For each $L \in \Ll$,
    \[ \E\bigg[\sum_{\substack{p \mid L(n) \\ p < x^{\rho}}} 1\bigg] \ll \rho^2 k^4 (\log{k})^2. \]
\item[\normalfont\rmfamily{(4)}] Suppose that $L(n)=a_0n+b_0$ is a linear function not belonging to $\Ll$. Suppose also that $|a_0|, |b_0| \le x$ and that $\Delta_L$, defined by
    \[ \Delta_L:= a_0 \prod_{j=1}^{k} |a_0 b_j - b_0 a_j|, \]
    is nonzero. Then
    \[ \Prob(p^{-}(L(n)) > x^{1/25}) \ll \frac{\Delta_L/\phi(\Delta_L)}{\log{x}}. \]
\end{enumerate}
Although $x_0$ may depend on $\Ll$, all implied constants in this statement are absolute.
\end{prop}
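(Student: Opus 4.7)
The plan is to construct the measure from Maynard--Tao sieve weights and then verify each property in turn. Following Maynard, set
$$w_n = \biggl(\sum_{d_i \mid L_i(n)\,\forall i} \lambda_{d_1,\dots,d_k}\biggr)^{\!2},$$
where $\lambda_{d_1,\dots,d_k}$ are the higher-dimensional Selberg-sieve coefficients built from a smooth symmetric $F \colon [0,1]^k \to \R$ supported in the simplex $\{t_1+\dots+t_k \leq 1\}$, and take $\mu(n) = w_n/S$ with $S = \sum_{n \in \A(x)} w_n$. Property (2) is then Maynard's headline estimate: with $F$ drawn from his log-plateau family, one has $S \asymp x (\log x)^k$ and $\sum_n w_n \1_{L(n)\text{ prime}} \gg x (\log x)^{k-1} (\log k)/k$, and dividing gives $\Prob(L(n) \text{ prime}) \gg (\log k)/k$.

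For property (1), start from the pointwise bound $|\lambda_{d_1,\dots,d_k}| \ll (\log x)^k$, expand the square, and bound the resulting double sum by $(\log x)^{2k} \prod_i \tau(L_i(n))^2$; since $\tau(p)^2 = 4$, dividing by $S \asymp x(\log x)^k$ yields the displayed bound, with the factor $\exp(O(k \log k))$ absorbing the combinatorial overhead from the symmetrization of the sieve weights and the $L^\infty$ norms of derivatives of $F$. For property (3), rewrite the expectation as $\sum_{p<x^{\rho}} \Prob(p \mid L(n))$; the Maynard sieve gives $\Prob(p \mid L(n)) \ll 1/p$ in the range under consideration, and a second-moment refinement (accounting for correlations among distinct primes $p$) produces the quoted $\rho^2 k^4 (\log k)^2$. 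Property (4) is a sieve bound for the auxiliary form $L$: since $\Delta_L \neq 0$ and $L \notin \Ll$, divisibility conditions coming from $L$ decouple from those defining $w_n$, so a standard upper-bound Selberg sieve combined with the fundamental lemma delivers $\Prob(p^-(L(n)) > x^{1/25}) \ll (\Delta_L/\phi(\Delta_L))/\log x$.

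The main obstacle is quantitative uniformity in $k$. All four bounds must carry absolute implied constants, which requires tracking the explicit dependence of Maynard's sieve estimates on $k$ and on the family $\Ll$ more carefully than is needed in his headline application, and for property (4) it requires isolating exactly how the nondegeneracy hypothesis $\Delta_L\neq 0$ enters the local densities. These refinements are precisely those developed in \cite{maynard14d}, so the proof reduces to extracting the stated bounds from the framework there and checking that the constants in (1), (3), and (4) come out as claimed.
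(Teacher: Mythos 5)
Your construction --- normalizing the Maynard--Tao weights $w_n$ to a probability measure on $\A(x)$ and reading off (1)--(4) from the quantitative estimates in \cite{maynard14d} --- is exactly the paper's approach; its proof is likewise a reduction to Proposition 6.1 of that paper (applied with $\A=\N$, $\curly{P}$ the set of all primes, $B=1$, $\theta=2/5$, $\alpha=1$), using Maynard's lower bounds on $\mathfrak{S}(\Ll)$ and $I_k(F)$ to absorb constants into $\exp(O(k\log k))$. One caution: your heuristic for (3), summing $\Prob(p\mid L(n))\ll 1/p$ over $p<x^{\rho}$, would only yield about $\log\log x$ and no ``second-moment refinement'' rescues it; the stated bound $\rho^2k^4(\log k)^2$ comes directly from Maynard's Proposition 6.1(4), which exploits the fact that the sieve weights already suppress those $n$ for which some $L_i(n)$ has a small prime factor.
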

\begin{proof}[Proof (sketch)] This follows from \cite[Proposition 6.1]{maynard14d}. In the setup of that proposition, $\A$ is the set of natural numbers, $\Ll$ is as above, $\curly{P}$ is the set of all primes, $B=1$, $\theta = 2/5$, and $\alpha=1$. The probability measure on $\A(x)$ assigns to each $n$ the probability mass $w(n)/\sum_{n \in \A(x)} w(n)$. Our (1) follows from Proposition 6.1(1) together with the immediately preceding estimate for $w_n$; we also use Maynard's lower bounds on $\mathfrak{S}(\Ll)$ and $I_k(F)$ given in (8.2) and Lemma 8.6, respectively. Our (2) is deduced from Proposition 6.1(1,2); here we use the estimate  $J_k/I_k \gg \log{k}/k$ and the observation that for each $L(n) = a_L n + b_L \in \Ll$, we have (in Maynard's notation)
\[ \#\curly{P}_{L,\A}(x) = \sum_{\substack{a_L x+b_L \leq p < 2a_L x+b_L \\ p \equiv b_L\pmod{a_L}}} 1 \sim \frac{1}{\phi(a_L)} \frac{a_L x}{\log{x}} \sim \frac{a_L}{\phi(a_L)}\ \frac{\#\A(x)}{\log{x}}. \]
Our (3) comes from Proposition 6.1(1,4), and (4) comes from Proposition 6.1(1,3).
\end{proof}

We now prove Theorem \ref{thm:main}.

\begin{proof} Assume that $r \ge \exp(C_2 m)$, and let $\kappa = \lceil \exp(C_3 m)\rceil$. Let $c$ be a small positive absolute constant. The necessary constraints on the constants $C_2$, $C_3$, and $c$ will emerge in the proof.

Let $q_1', \dots, q_r'$ be the integers obtained from $q_1, \dots, q_r$ by replacing each $q_i$ with its squarefree part. That is, $q_i'$ is the unique squarefree integer for which $q_i/q_i'$ is a square. Since $q_1, \dots, q_r$ satisfy (*), so do $q_1', \dots, q_r'$. Let $k=2\kappa$, and let $L_i$ and $\tilde{L}_i$, for $1 \leq i \leq \kappa$, be the linear functions produced by  Lemma \ref{lem:admissible} applied with $q_1', \dots, q_r'$. Every prime dividing $q_1' \cdots q_r'$ divides $f:=\mathfrak{f}(\Q(\sqrt{q_1}, \dots, \sqrt{q_r})/\Q)$, and thus $v = 16 \prod_{\ell \mid q_1'\dots q_r',~\ell >2}\ell$ divides $16f$.

We now invoke Proposition \ref{prop:maynard}. We will show that with positive probability, an $n \in \A(x)$ satisfies all of
\begin{enumerate}
\item[(i)] at least $m$ of $L_1(n)$, \dots, $L_\kappa(n)$ are prime,
\item[(ii)] $p^{-}(L_i(n)) \ge x^{\frac{c}{k^3\log{k}}}$ and $p^{-}(\tilde{L}_i(n)) \ge x^{\frac{c}{k^3\log{k}}}$ for all $i=1,\dots, \kappa$,
\item[(iii)] all integers in the interval $[L_1(n),L_\kappa(n)]$ that are not one of the $L_i(n)$ are composite,
    \item[(iv)] whenever $p=L(n)$ is prime with $L \in \{L_1, \dots, L_{\kappa}\}$, $p$ possesses some element of $\Qq$ as a primitive root.
\end{enumerate}
If (i)--(iv) hold for $n$, then the set of primes in $[L_1(n), L_{\kappa}(n)]$ has at least $m$ elements, each one of which possesses some element of $\Qq$ as a primitive root. Moreover, the difference between the largest and smallest such primes is at most
\[  L_{\kappa}(n) - L_1(n) = a_{\kappa}-a_1 \leq v (2\kappa)^{C_1} \leq f \exp(C_4 m), \]
provided that $C_4$ is large enough in terms of $C_1$ and $C_3$.  Thus, we obtain Theorem \ref{thm:main} with $C = C_2$ and $C' = C_4$.

To begin analyzing (i)--(iv), let $\Pp$ be the set of primes, and consider the random variable $X := \sum_{i=1}^{\kappa} \1_{\Pp}(L_i(n))$. Proposition \ref{prop:maynard}(2) and our choice of $\kappa$ yield $\E[X] \gg C_3 m$. We assume $C_3$ is large enough that $\E[X] > m$. Noting the inequality \[ \1_{X\geq m} \geq \kappa^{-1}(X-(m-1)), \] and taking expectations, we find that (i) holds with probability at least $\kappa^{-1}$.

Let $L$ be one of the linear functions in $\Ll$. Then \[\Prob(p^{-}(L(n)) < x^{c/(k^3\log{k})})\le \E\bigg[\sum_{p \mid L(n),~p < x^{c/(k^3\log{k})}}1\bigg].\] So from Proposition \ref{prop:maynard}(3), (ii) fails with probability
\[ \ll k \cdot \left(\frac{c}{k^3\log{k}}\right)^2 k^4 (\log{k})^2. \]
We may assume $c>0$ is small enough that the odds of failure are less than $\frac{1}{2}\kappa^{-1}$. Then (i) and (ii) hold simultaneously with probability at least $\frac{1}{2}\kappa^{-1}$.

We claim that (iii) fails with probability $o(1)$, as $x\to\infty$. It is enough to show that if $a$ is a fixed integer from $[a_1, a_\kappa]$, and $a \not\in\{a_1,\dots, a_\kappa\}$, then the probability that $L(n):=vn+a$ is prime is $o(1)$. This follows immediately from Proposition 6.1(4) if $L$ is not a rational multiple of any $L_i$ or $\tilde{L}_i$. Since $L$ has leading coefficient $v$ and $a\not\in \{a_1,\dots, a_\kappa\}$, $L$ is not a multiple of any $L_i$. Since each $\tilde{L}_i$ has leading coefficient $v/T$, if $L$ is a multiple of some $\tilde{L}_i$, then $L = T \tilde{L_i}$; but then $T \mid L(n)$ and so $L(n)$ is  composite for all $n\in \A(x)$.

Now assume that (ii) holds but that (iv) fails. We will show that this occurs with probability $o(1)$, as $x\to\infty$. In view of our previous estimates, this will complete the proof of Theorem \ref{thm:main}.

Assume that $p=L(n)$ is prime, with $L \in \{L_1, \dots, L_{\kappa}\}$, but that $p$ fails to have any $q\in \Qq$ as a primitive root. From Lemma \ref{lem:HB}(1) and our choice of $\Ll$, each $q \in \Qq$ is is a nonsquare modulo $p$. Thus, for each $q \in \Qq$, there is a prime $s=s_q$ dividing $(p-1)/T$ for which $q$ is an $s$th power modulo $p$. Put $t= \lceil 2c^{-1} k^3\log{k}\rceil$. Since (ii) holds, $\Omega((p-1)/T) \leq t$. (We assume here, as elsewhere in the proof, that $x$ is sufficiently large.) Recalling that $k=2\lceil \exp(C_3m)\rceil$, we  assume $C_2$ is large enough that
\[ \exp(C_2 m) > (t-1)t. \]
Since $\#\Qq = r \geq \exp(C_2 m)$, the pigeonhole principle guarantees that at least $t$ values of $q \in \Qq$ share the same value of $s_q$; call this common value $s$. Relabeling, we can assume these are $q_1,\dots, q_t$. Then
\[ \#\langle q_1, \dots, q_t \bmod{p}\rangle \leq \frac{p-1}{s}\le \frac{p-1}{p^{-}((p-1)/T)}\leq x^{1-\frac{c}{2 k^3\log{k}}} \le x^{1-\frac{1}{t}}. \]
By Lemma \ref{lem:MS}, $p$ is restricted to a set of size $\ll_{\Qq} (x^{1-1/t})^{1+1/t} = x^{1-\frac{1}{t^2}}$.
Given $L$, the prime $p=L(n)$ determines $n$, restricting $n$ also to a set of size $O_{\Qq}(x^{1-1/t^2})$. Since there are $O_m(1)$ possibilities for $L$, the number of $n\in \A(x)$ for which (ii) holds but (iv) fails is $O_{\Qq,m}(x^{1-1/t^2})$.

Since $n$ satisfies (ii), each of $L_1(n),\dots,L_{\kappa}(n)$ has at most $t$ prime factors. So from Proposition \ref{prop:maynard}(i), the probability mass at $n$ is at $O_m(x^{-1} (\log{x})^{k})$. Thus, the probability of selecting an $n$ detected in the previous paragraph is
$O_{m,\Qq}((\log{x})^k x^{-1/t^2})$, which is $o(1)$ as $x\to\infty$.
\end{proof}

\section{Preparation for the proof of Theorem \ref{thm:EC}}
We begin with some background on elliptic curves. For each prime $\ell$, let $K_\ell$ denote the $\ell$-torsion field $\Q(E[\ell])$. It is well-known and easy to check that $K_\ell$ is a Galois extension of $\Q$. Now let $p$ be a prime of good reduction for $E$. Clearly, $E(\F_p)$ is cyclic if and only if it does not contain $(\Z/\ell\Z)^2$ for any prime $\ell$.
The following lemma, due to Ram Murty \cite[p. 159]{murty83}, shows that whether or not $E(\F_p)$ is cyclic amounts to a series of conditions on the splitting of $p$ in the fields $K_\ell$.

\begin{lem}\label{lem:murty} Let $p$ be a prime of good reduction for $E$. If $\ell$ is a prime with $\ell \neq p$, then $E(\F_p)$ contains $(\Z/\ell\Z)^2$ if and only if $p$ splits completely in $K_{\ell}$. As a consequence, $\E(\F_p)$ is cyclic if and only if for all primes $\ell \neq p$,
\begin{equation}\label{eq:notsplit} \text{$p$ does not split completely in $K_{\ell}$}. \end{equation}
\end{lem}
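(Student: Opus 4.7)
The plan is to reduce both assertions to standard facts about the reduction of $\ell$-torsion points on $E$. Since $p$ is a prime of good reduction and $\ell\neq p$, the reduction map
\[ E[\ell] \longrightarrow \bar{E}(\overline{\F_p})[\ell] \]
is a $\Gal(\bar\Q/\Q)$-equivariant isomorphism of $\F_\ell$-vector spaces (this is the classical injectivity of reduction on prime-to-$p$ torsion in the good-reduction setting, e.g.\ Silverman, \emph{The Arithmetic of Elliptic Curves}, VII.3.1), where $\Gal(\bar\Q/\Q)$ acts on the target through any choice of prime of $\bar\Q$ above $p$.

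For the main equivalence, fix a prime of $\bar\Q$ above $p$ and let $\Frob_p\in \Gal(K_\ell/\Q)$ be the corresponding Frobenius; this is well defined up to conjugation because $K_\ell/\Q$ is unramified at $p$ (good reduction together with $\ell\neq p$ ensures this). Since $K_\ell = \Q(E[\ell])$, the prime $p$ splits completely in $K_\ell$ if and only if $\Frob_p$ acts trivially on $E[\ell]$. By the Galois-equivariance of the reduction map, this holds if and only if $\Frob_p$ acts trivially on $\bar{E}(\overline{\F_p})[\ell]$, i.e.\ if and only if all $\ell$-torsion points of $\bar E$ are $\F_p$-rational. But $\bar E(\overline{\F_p})[\ell]\cong (\Z/\ell\Z)^2$, so this last condition is exactly the statement that $\bar E(\F_p)$ contains a copy of $(\Z/\ell\Z)^2$, which is the desired equivalence.

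For the ``as a consequence'' clause, recall that a finite abelian group $A$ is cyclic if and only if its $\ell$-primary part is cyclic for every prime $\ell$, and the $\ell$-primary part of $A$ is cyclic if and only if $A$ does not contain $(\Z/\ell\Z)^2$. Applying this to $A=E(\F_p)$, cyclicity is equivalent to the nonexistence of $(\Z/\ell\Z)^2\hookrightarrow E(\F_p)$ for any prime $\ell$. The case $\ell=p$ can be disposed of at once: in characteristic $p$, $\bar E[p](\overline{\F_p})$ has order at most $p$, so $E(\F_p)$ never contains $(\Z/p\Z)^2$. For the remaining primes $\ell\neq p$, the first part of the lemma gives condition \eqref{eq:notsplit}, completing the proof.

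The argument is almost entirely bookkeeping; the only substantive input is injectivity and Galois-equivariance of the reduction map on prime-to-$p$ torsion, which I would simply cite. There is no real obstacle — the main thing to be careful about is the $\ell=p$ case in the cyclicity consequence, which is why the statement only quantifies over $\ell\neq p$.
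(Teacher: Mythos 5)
Your proof is correct and complete; the paper itself offers no proof of this lemma, simply citing Ram Murty \cite[p.~159]{murty83}, and your argument (injectivity and Galois-equivariance of reduction on prime-to-$p$ torsion, plus the identification of ``splits completely'' with ``Frobenius acts trivially on $E[\ell]$'') is exactly the standard one found there. The two points worth being careful about --- that a copy of $(\Z/\ell\Z)^2$ in $E(\F_p)$ must exhaust all of $E[\ell]$ by counting, and that $\ell=p$ is excluded because the $p$-torsion in characteristic $p$ has order at most $p$ --- are both handled properly.
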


\begin{remark} If $E(\F_p)$ contains $(\Z/\ell\Z)^2$, then $\ell^2 \mid \#E(\F_p) \leq (\sqrt{p}+1)^2$, and so it suffices to test \eqref{eq:notsplit} for
\begin{equation}\label{eq:sufficetotest} \ell \le \sqrt{p}+1. \end{equation}
\end{remark}

If we assume the GRH, then the following theorem of Lagarias and Odlyzko \cite{LO77} gives a satisfactory estimate for the frequency with which primes split completely in $K_\ell$. We state the result incorporating a small improvement by Serre \cite[\S2.4]{serre81}.

\begin{prop}[Effective Chebotarev theorem, on GRH]\label{thm:serre}  Let $K$ be a finite Galois extension of $\Q$, and let $\C$ be a conjugacy class of $\Gal(K/\Q)$. The number of unramified primes $p \leq x$ with $\legq{K/\Q}{p} = \C$ is given by
	\[ \frac{\#\C}{[K:\Q]}\Li(x)+ O\left(\#\C \cdot x^{1/2}\left(\frac{\log|\Delta_K|}{[K:\Q]} + \log{x}\right)\right), \]
for all $x\geq 2$. Here the $O$-constant is absolute.
\end{prop}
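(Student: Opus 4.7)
The plan is to adapt the classical explicit-formula proof of the prime number theorem to the nonabelian setting, following the Lagarias--Odlyzko scheme with Serre's refinement. Fix a representative $g \in \C$ and write $G = \Gal(K/\Q)$. By orthogonality of irreducible characters, $\1_\C(\sigma) = \frac{|\C|}{|G|}\sum_{\chi \in \widehat{G}} \overline{\chi(g)}\chi(\sigma)$, so summing over unramified primes,
\[ \pi_\C(x) \;=\; \frac{|\C|}{|G|}\sum_{\chi} \overline{\chi(g)}\, \pi(x,\chi), \qquad \pi(x,\chi) := \sum_{\substack{p \le x \\ p \text{ unram.}}} \chi(\Frob_p). \]
The trivial character contributes $\frac{|\C|}{|G|}\pi(x) = \frac{|\C|}{|G|}\Li(x) + O\!\left(\frac{|\C|}{|G|}\sqrt{x}\log x\right)$ under GRH for $\zeta_\Q$, giving the predicted main term. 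The real work is to bound $\pi(x,\chi)$ uniformly for nontrivial irreducible $\chi$.

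For each such $\chi$, I would work with the Artin $L$-function $L(s,\chi)$ and the Chebyshev-type sum $\psi(x,\chi) := \sum_{N\mathfrak{p}^k \le x} \chi(\Frob_\mathfrak{p}^k)\log N\mathfrak{p}$. The obstacle that $L(s,\chi)$ is not known to be entire (Artin's conjecture) is circumvented by Brauer induction, expressing $\chi$ as a $\Z$-linear combination of characters induced from one-dimensional characters of subgroups; this realizes $L(s,\chi)$ as a finite $\pm$-power product of Hecke $L$-functions over intermediate fields, to which the classical analytic machinery applies. Applying Perron to $-L'(s,\chi)/L(s,\chi)$ and shifting past the critical line produces
\[ \psi(x,\chi) \;=\; \delta_{\chi=1}\, x \;-\; \sum_{|\Im\rho| \le T} \frac{x^\rho}{\rho} \;+\; E(x,T,\chi), \]
with $E$ controlled by a Riemann--von Mangoldt zero-count $N(T,\chi) \ll T\log\mathfrak{f}(\chi) + \chi(1)\,T\log T$. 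Under GRH for $\zeta_K$ (which transmits to each factor $L(s,\chi)$ via the Artin factorization $\zeta_K = \prod_\chi L(s,\chi)^{\chi(1)}$), every nontrivial $\rho$ has real part $\tfrac{1}{2}$; choosing $T\approx \sqrt{x}$ balances the two error contributions and yields
\[ \psi(x,\chi) \;\ll\; \sqrt{x}\bigl(\log\mathfrak{f}(\chi) + \chi(1)\log x\bigr), \]
and standard partial summation transfers this estimate to $\pi(x,\chi)$ (the $k\ge 2$ prime-power terms contribute only $O(\sqrt{x})$).

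Multiplying by $\overline{\chi(g)}$, summing over $\chi\ne 1$, and invoking $|\chi(g)|\le \chi(1)$, the conductor--discriminant formula $\log|\Delta_K| = \sum_{\chi} \chi(1)\log\mathfrak{f}(\chi)$, and $\sum_\chi \chi(1)^2=|G|$, one collects
\[ \pi_\C(x) \;=\; \frac{|\C|}{|G|}\Li(x) + O\!\left(\#\C\cdot\sqrt{x}\!\left(\frac{\log|\Delta_K|}{[K:\Q]} + \log x\right)\right). \]
Serre's sharpening (the $1/[K:\Q]$ inside the logarithm) comes from exploiting the conductor--discriminant identity uniformly across all $\chi$ simultaneously, rather than the wasteful bound $\log\mathfrak{f}(\chi)\le \log|\Delta_K|$ for each term; this is where the $\#\C$ prefactor replaces the cruder $\#\C/|G|$ one would otherwise obtain. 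The main obstacle is managing the Artin $L$-functions uniformly: the Brauer reduction and uniform zero-counting are classical but technical, and the combinatorial bookkeeping needed to realize Serre's refinement (rather than the weaker LO bound) is where the proof becomes delicate.
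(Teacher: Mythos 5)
The paper does not prove this proposition at all: it is quoted directly from Lagarias--Odlyzko \cite{LO77} together with Serre's refinement \cite[\S 2.4]{serre81}, and your sketch is precisely the standard argument of those references (character orthogonality, Brauer induction to reduce Artin $L$-functions to Hecke $L$-functions, the explicit formula with $T\approx\sqrt{x}$ under GRH, and the conductor--discriminant formula to assemble the error term). The outline is correct, including the observation that summing $\chi(1)\log\mathfrak{f}(\chi)$ over all $\chi$ at once -- rather than bounding each $\log\mathfrak{f}(\chi)$ by $\log|\Delta_K|$ -- is what produces the $\#\C\cdot\bigl(\frac{\log|\Delta_K|}{[K:\Q]}+\log x\bigr)$ shape of the bound.
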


\begin{remark} To estimate the $O$-term, we will use the following estimate valid for any Galois extension $K/\Q$ (see \cite[Proposition 6]{serre81}):
\begin{equation}\label{eq:hensel} \frac{1}{[K:\Q]} \log|\Delta_K| \leq \log~[K:\Q]+ \sum_{p \mid \Delta_K} \log{p}. \end{equation}
\end{remark}

To apply \eqref{eq:hensel}, we need to understand which primes ramify in $K_\ell$. The following result can be derived from a criterion of N\'eron--Ogg--Shafarevich \cite[Theorem 7.1, p. 201]{silverman09}.

\begin{lem} Let $E/\Q$ be an elliptic curve. Every prime that ramifies in $K_{\ell}$ divides $\ell \cdot \Delta_{E}$.
\end{lem}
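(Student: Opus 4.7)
The plan is to apply the Néron--Ogg--Shafarevich criterion in contrapositive form. Fix a prime $p$ not dividing $\ell\cdot\Delta_E$. Since $p\nmid \Delta_E$, the curve $E$ has good reduction at $p$. Since $p\neq \ell$, the criterion (as in \cite[Theorem 7.1, p.\ 201]{silverman09}) then tells us that the Galois action on the $\ell$-adic Tate module $T_\ell(E)$ is unramified at $p$; in particular, the inertia subgroup at $p$ acts trivially on $E[\ell]$.

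The next step is to translate this into a ramification statement for the number field $K_\ell=\Q(E[\ell])$. Since $K_\ell$ is, by definition, the fixed field of the kernel of the mod-$\ell$ Galois representation $\rho_\ell\colon \Gal(\overline{\Q}/\Q)\to \mathrm{GL}(E[\ell])$, and since the inertia group at $p$ lies in this kernel, any prime of $\Z_{K_\ell}$ above $p$ is unramified over $\Q$. Taking the contrapositive: if $p$ ramifies in $K_\ell$, then $p\mid \ell\cdot\Delta_E$.

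There is no serious obstacle here --- the content lies entirely in the Néron--Ogg--Shafarevich theorem and the definition of $K_\ell$. The only subtlety worth flagging in the write-up is that ``good reduction'' really is equivalent to $p\nmid \Delta_E$ when $\Delta_E$ denotes the minimal discriminant, which is our convention.
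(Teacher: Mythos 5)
Your proof is correct and follows exactly the route the paper indicates: the paper gives no written proof, merely citing the N\'eron--Ogg--Shafarevich criterion \cite[Theorem 7.1, p.~201]{silverman09}, and your argument is the standard derivation from that criterion (good reduction at $p\nmid\Delta_E$ plus $p\neq\ell$ implies inertia at $p$ acts trivially on $E[\ell]$, so $K_\ell/\Q$ is unramified at $p$). The remark about $\Delta_E$ being the minimal discriminant is a worthwhile detail and matches the paper's conventions.
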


We will find bounded gaps among primes $p$ produced by certain linear functions, with coefficients chosen to give $p$ a ``leg up'' in terms of $E(\F_p)$ being cyclic. To build these functions, we need the following analogue of Lemma \ref{lem:HB}.

\begin{lem}\label{lem:HB2} Let $M$ be either a quadratic or abelian cubic extension of $\Q$. Let $f = \mathfrak{f}(M/\Q)$, and let $v= 2^4 3^3 \prod_{\ell \mid f,~\ell > 3} \ell$. One can select an integer $u$ coprime to $v$ so that both of the following hold:
\begin{enumerate}
\item[\normalfont\rmfamily{(1)}] For every prime $p\equiv u\pmod{v}$, $p$ is inert in $M$.
\item[\normalfont\rmfamily{(2)}] If $T$ is the largest power of $2$ dividing $u-1$, then $T \in \{2,4,8\}$, and $\gcd(\frac{u-1}{T},v)=1$.
\end{enumerate}
\end{lem}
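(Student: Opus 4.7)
The plan is to construct $u$ modulo $v$ via the Chinese Remainder Theorem, prescribing its residue at each prime power dividing $v$ separately. Since $M/\Q$ is abelian of degree $2$ or $3$, Kronecker--Weber gives $M\subset\Q(\zeta_f)$, and there is a primitive Dirichlet character $\chi$ of conductor $f$ and order $[M:\Q]$ cutting out $M$, with local factorisation $\chi=\prod_{\ell\mid f}\chi_\ell$. A prime $p\nmid f$ is inert in $M$ precisely when $\chi(p)$ has full order $[M:\Q]$: i.e.\ $\chi(p)=-1$ (quadratic case) or $\chi(p)\in\{\omega,\omega^2\}$ (cubic case, with $\omega$ a primitive cube root of unity). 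The first thing to verify is that $f\mid v$: in the quadratic case the $2$-part of $f$ divides $8$ and the odd part of $f$ is squarefree; in the cubic case $f$ is odd with $3$-part dividing $9$, and every other prime factor $\ell$ satisfies $\ell\equiv 1\pmod 3$ and $\ell\parallel f$. Both cases give $f\mid v$.

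With $f\mid v$, the residue of $u$ modulo $v$ determines $\chi(u)$, and conditions (1) and (2) decompose into local conditions at each prime $\ell\mid v$. Writing $u_\ell$ for the CRT component, condition (1) asks for $\chi_\ell(u_\ell)=\eta_\ell$ at each $\ell\mid f$, with the $\eta_\ell$ constrained only by the requirement that $\prod_{\ell\mid f}\eta_\ell$ generates the image of $\chi$; condition (2) decomposes into $u_2\not\equiv 1\pmod{16}$ (which gives $T\in\{2,4,8\}$) together with $u_\ell\not\equiv 1\pmod\ell$ for each odd prime $\ell\mid v$ (which gives the coprimality of $(u-1)/T$ with the odd part of $v$).

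It remains to check that each local condition is satisfiable, and that the $\eta_\ell$ can be chosen to produce any prescribed product in the image of $\chi$. For an odd prime $\ell>3$ dividing $f$, each value of $\chi_\ell$ is taken on by at least $(\ell-1)/3\ge 2$ residues in $(\Z/\ell\Z)^\times$ (using $\ell\ge 5$ in the quadratic case and $\ell\ge 7$ in the cubic case), so any prescribed $\eta_\ell$ admits a representative $\not\equiv 1\pmod\ell$. At $\ell=3$ in the cubic case with $9\mid f$, a direct computation on $(\Z/9\Z)^\times$ shows that each of $1,\omega,\omega^2$ is attained by some residue congruent to $2\pmod 3$. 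At $\ell=2$, one verifies that each value of $\chi_2$ is attained by several residues mod $16$ distinct from $1$ in the quadratic case, while in the cubic case there is no character condition at $2$ at all. The point I expect to be most delicate is $\ell=3$ in the quadratic case with $3\mid f$: the requirement $u_3\not\equiv 1\pmod 3$ forces $\chi_3(u_3)=-1$, removing all freedom at that prime. This is not fatal because the remaining local factors contribute freely, and can be adjusted so that the total product equals $-1$. Assembling the local residues by CRT then yields a valid $u\pmod v$; conditions (1) and (2) follow from the construction together with $f\mid v$.
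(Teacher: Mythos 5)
Your proof is correct, and it rests on the same two structural inputs as the paper's --- the explicit description of the conductor $f$ in the quadratic and cubic cases (hence $f\mid v$) and the class-field-theoretic dictionary for $M\subset\Q(\zeta_f)$ --- but the construction itself is organized differently. The paper works with the subgroup $H\le(\Z/f\Z)^{\times}$ fixing $M$ rather than with characters: it first produces a single global residue $u_0\notin H$ with $u_0\equiv 2\pmod{3}$, $u_0$ odd, and $16\nmid u_0-1$ (the existence of such $u_0$ when $3\mid f$ is exactly your ``delicate point,'' handled there by a short counting argument), and then repairs the remaining conditions $\ell\nmid u-1$ one prime at a time by replacing $u_0$ with $4u_0$ or $-8u_0$ modulo $\ell$; since $4$ is a square and $-8$ is a cube, and $H$ has index $2$ resp.\ $3$, these corrections cannot push $u$ back into $H$. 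Your version instead assembles $u$ entirely from local data, which costs you the fiber counts for each $\chi_\ell$ --- the bound you want is $(\ell-1)/\mathrm{ord}(\chi_\ell)\ge 2$ rather than $(\ell-1)/3$, since the latter fails for $\ell=5$ in the quadratic case --- together with the final product adjustment, but it makes transparent that the only place where inertness and the condition $\gcd(\frac{u-1}{T},v)=1$ genuinely interact is at $\ell=3$ in the quadratic case. There the resolution is even simpler than you suggest: the forced value $\eta_3=-1$ is precisely the product you need, so all other $\eta_\ell$ may be taken equal to $+1$. In short, the paper's square/cube trick is the slicker device; your fully local decomposition is the more systematic one and would adapt more readily to abelian $M$ of other prime degrees.
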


\begin{proof} We make free use of the correspondence between abelian extensions of $\Q$ and groups of primitive Dirichlet characters, as reviewed in \cite[Chapter 3]{washington97}.

If $M/\Q$ is quadratic, then $f$ is the absolute value of a fundamental discriminant, whereas if $M/\Q$ is abelian cubic, then
\[ f = 9 q_1 \cdots q_k \quad\text{or}\quad f = q_1\cdots q_k,\quad\text{for distinct primes}\quad q_i \equiv 1\pmod{6}. \]
Thus, $2^4 \nmid f$, $3^3\nmid f$, and every prime $\ell > 3$ that divides $f$ appears to the first power only. Let $H$ be the subgroup of $\Gal(\Q(\zeta_f)/\Q)$ that fixes $M$. We identify $\Gal(\Q(\zeta_f)/\Q)$ with $(\Z/f\Z)^{\times}$. Note that $H$ has index $[M:\Q] > 1$. Since $M$ is cyclic of prime degree, an unramified prime $p$ either remains inert or splits completely, the latter holding exactly when $p\bmod{f} \in H$.

Choose an integer $u_0$ with
\begin{equation}\label{eq:u00} \gcd(u_0,f)=1, \quad  u_0\bmod{f} \not \in H, \quad\text{and}\quad u_0 \equiv 2\pmod{3}.\end{equation} This is clearly possible if $3\nmid f$. If $3\mid f$, we argue by contradiction: If there is no such $u_0$, then
$\#H > \#\{1 \leq h \leq f: \gcd(h,f)=1, h\equiv 2\pmod{3}\} = \frac{1}{2}\phi(f)$, where the inequality is strict since $1\bmod{f} \in H$. But then $H = \Gal(\Q(\zeta_f)/\Q)$, a contradiction.

We can also assume that
\begin{equation}\label{eq:u01} u_0 \equiv 1\pmod{2}. \end{equation}
Indeed, if $f$ is even, this condition is automatic, whereas if $f$ is odd but $u_0$ is even, we can replace $u_0$ by $u_0 + 3f$. Finally, we can assume that
\begin{equation}\label{eq:u02} 16 \nmid u_0-1, \end{equation}
by replacing $u_0$ with $u_0 + \lcm[24,f]$ if necessary.

If $M/\Q$ is quadratic, then for each prime $\ell > 3$ dividing $f$, put
\[ u_{\ell} = \begin{cases} u_0 &\text{if $\ell \nmid u_0-1$}, \\
4u_0&\text{otherwise}.
\end{cases} \]
Then $\ell \nmid u_{\ell}-1$.  If $M/\Q$ is abelian cubic, then for each prime $\ell > 3$ dividing $f$, put
\[ u_{\ell} = \begin{cases} u_0&\text{if $\ell \nmid u_0-1$}, \\
-8u_0&\text{otherwise}.
\end{cases} \]
In this case, we again have that $\ell \nmid u_{\ell}-1$. Finally, select $u$ so that
\[ u \equiv u_0 \pmod{2^4\cdot 3^3}\quad\text{and}\quad u\equiv u_{\ell}\pmod{\ell}~\text{$\forall \ell \mid f$ with $\ell > 3$}. \]
This puts $u$ in a well-defined coprime residue class modulo $v$.

We now check (1) and (2). In the case when $M/\Q$ is quadratic, $u\equiv u_0 g^2\pmod{f}$ for some integer $g$. Since $H$ has index $2$, $g^2\bmod{f} \in H$. Since $u_0 \bmod{f}\not\in H$, we find that $u\bmod{f}\not\in H$. So if $p\equiv u\pmod{v}$, then $p\bmod{f} \not\in H$ (notice $f \mid v$) and so $p$ is inert in $M$. An analogous argument works when $M/\Q$ is abelian cubic; in that case, $H$ has index $3$ and $u\equiv u_0 g^3\pmod{f}$ for some $g$. This completes the verification of (1). Since $u\equiv u_0\pmod{16}$, \eqref{eq:u01} and \eqref{eq:u02} yield $T \in \{2,4,8\}$. Since $u\equiv u_0\pmod{3}$, \eqref{eq:u00} shows that $3\nmid u-1$. For each prime $\ell > 3$ dividing $v$, our choices of $u_{\ell}$ ensure that $\ell \nmid u-1$.  Hence, $\gcd(\frac{u-1}{T},v)=1$, which completes the proof of (2).
\end{proof}

By imitating the deduction of Lemma \ref{lem:admissible} from Lemma \ref{lem:HB}, we obtain the following consequence of Lemma \ref{lem:HB2}.

\begin{lem}\label{lem:admissible2}  Let $M$ be either a quadratic or abelian cubic extension of $\Q$. Let $u$ and $v$ be chosen as in Lemma \ref{lem:HB2}. Let $\kappa$ be a natural number. There are integers $a_1 < \dots <  a_{\kappa}$, each congruent to $u\bmod{v}$, for which the $2\kappa$ linear functions
\begin{alignat*}{3} L_1(n) &= vn + a_1, &\quad \dots, \quad &L_{\kappa}(n) = vn + a_{\kappa}, \\
\tilde{L}_1(n) &= \frac{v}{T}n + \frac{a_1-1}{T}, &\quad \dots, \quad &\tilde{L}_{\kappa}(n) = \frac{v}{T} n + \frac{a_{\kappa}-1}{T} \end{alignat*}
make up an admissible family. Moreover, we can select the $a_{i}$ in such a way that
\[ a_{\kappa} - a_1 \leq v\cdot (2\kappa)^{C_5}. \]
\end{lem}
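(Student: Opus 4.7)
The plan is to mimic the proof of Lemma \ref{lem:admissible} essentially verbatim. The only inputs from the Artin setting used there were the structural properties of the pair $(u,v)$ and the integer $T$ produced by Lemma \ref{lem:HB}, and Lemma \ref{lem:HB2} delivers exactly the same properties in the present setting: $\gcd(u,v)=1$, $T\in\{2,4,8\}$, and $\gcd((u-1)/T,v)=1$. So I expect no new obstacle — just a translation.

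Concretely, I would first observe that because $\gcd(u,v)=1$ and $\gcd((u-1)/T,v/T)=1$ (the latter following from part (2) of Lemma \ref{lem:HB2}), the polynomial
\[ F(A)\;=\;(vA+u)\!\left(\tfrac{v}{T}A+\tfrac{u-1}{T}\right) \]
has at most two roots modulo every prime $\ell$, and has no roots modulo any prime $\ell\mid v$. This is precisely the local input needed to apply the fundamental lemma of the sieve: provided $C_5$ is chosen large enough, the number of integers $A\in[0,(2\kappa)^{C_5}]$ with $p^{-}(F(A))>2\kappa$ is
\[ \gg (2\kappa)^{C_5}\prod_{\ell\le 2\kappa}\!\left(1-\tfrac{2}{\ell}\right), \]
and a further enlargement of $C_5$ makes this exceed $\kappa$.

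Next, I would pick any $\kappa$ such integers $A_1<\dots<A_\kappa$ and set $a_i:=vA_i+u$, so that $a_i\equiv u\pmod v$ and $a_\kappa-a_1\le v(2\kappa)^{C_5}$ as required. For every prime $p\le 2\kappa$, the condition $p^{-}(F(A_i))>2\kappa$ translates directly into $p\nmid L_i(0)=a_i$ and $p\nmid\tilde L_i(0)=(a_i-1)/T$, so $p\nmid\prod_{i}L_i(0)\tilde L_i(0)$. Each $L_i$ and $\tilde L_i$ has coprime coefficients (again by the two $\gcd$ statements above), so admissibility need only be checked at primes $p\le k=2\kappa$, and it is.

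The only point requiring any care is the verification that $F(A)$ has no forbidden local obstruction: at primes dividing $T$ or $v/T$ one needs the numerator $(u-1)/T$ to be a unit, which is exactly what clause (2) of Lemma \ref{lem:HB2} was engineered to provide. With that in hand, the sieve estimate is uniform and the whole argument is mechanical. I do not anticipate any step being a genuine obstacle.
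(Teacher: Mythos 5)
Your proposal is correct and is exactly what the paper intends: the paper gives no separate proof of this lemma, stating only that it follows ``by imitating the deduction of Lemma \ref{lem:admissible} from Lemma \ref{lem:HB},'' and your argument is precisely that imitation (fundamental lemma of the sieve applied to $(vA+u)(\frac{v}{T}A+\frac{u-1}{T})$, then $a_i = vA_i+u$), with the local verifications at primes dividing $v$ supplied by Lemma \ref{lem:HB2}(2) just as Lemma \ref{lem:HB}(2) supplied them before.
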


\section{Proof of Theorem \ref{thm:EC}}
\subsection{The GRH case}
By assumption, $K_2 \neq \Q$. Since $K_2$ is the splitting field of a cubic polynomial, it has a subfield $M$ that is either quadratic or abelian cubic over $\Q$. Let $\kappa = \lceil\exp(C_6 m) \rceil$, where $C_6$ is a large absolute constant. Let $k=2\kappa$, and let $\Ll$ consist of the linear functions $L_1, \dots, L_{\kappa}$, $\tilde{L}_1, \dots, \tilde{L}_{\kappa}$ constructed in Lemma \ref{lem:admissible2}. Recall that each $L_i$ has leading coefficient $v=2^4 3^3 \prod_{\ell \mid f,~\ell > 3}\ell$, where $f$ is the conductor of $M$. If $\ell \mid f$, then $\ell \mid \Delta_{K_2}$, and so $\ell =2$ or $\ell$ is a prime of bad reduction. Consequently,
\[ v \mid 2^4 3^3 \cdot \rad(\Delta_{E}). \]
We warn the reader of the following innocuous abuse of notation: If $L=L_i$, we will write $\tilde{L}$ for $\tilde{L}_i$.

Assume $x$ is large. We will show that if $c>0$ is a sufficiently small absolute constant, then  with positive probability, an $n \in \A(x)$ satisfies all of
\begin{enumerate}
\item[(i)] at least $m$ of $L_1(n)$, \dots, $L_\kappa(n)$ are prime,
\item[(ii)] $p^{-}(L_i(n)) \ge x^{\frac{c}{k^3\log{k}}}$ and $p^{-}(\tilde{L}_i(n)) \ge x^{\frac{c}{k^3\log{k}}}$ for all $i=1,\dots, \kappa$,
\item[(iii)] all integers in the interval $[L_1(n),L_\kappa(n)]$ that are not one of the $L_i(n)$ are composite,
\item[(iv)] if $p=L(n)$ is prime with $L \in \{L_1, \dots, L_{\kappa}\}$, then $p$ is inert in every $K_\ell$ with $\ell> x^{1/3}$ and $\ell \neq p$,
\item[(v)] if $p=L(n)$ is prime with $L \in \{L_1, \dots, L_{\kappa}\}$, then $E(\F_p)$ is cyclic.
\end{enumerate}
If all of (i)--(v) hold for $n$, then the set of primes $p \in [L_1(n), L_\kappa(n)]$ has at least $m$ elements, all of these have $E(\F_p)$ cyclic, and the gap between the largest and smallest is at most
\[ L_{\kappa}(n)-L_1(n) \leq v \cdot (2\kappa)^{C_5} \leq \rad(\Delta_E) \cdot \exp(C_7 m); \]
the GRH half of Theorem \ref{thm:EC} follows.

For the sake of readability, in the remainder of the proof we suppress the dependence of implied constants on $E$.

To handle (i)--(iii), we proceed as in the proof of Theorem \ref{thm:main}. Arguments given there show that if we fix $C_6$ sufficiently large and $c$ sufficiently small, then (i) and (ii) hold simultaneously with probability at least $\frac{1}{2}\kappa^{-1}$, while (iii) fails with probability $o(1)$, as $x\to\infty$.

Now suppose that (i)--(iii) hold for $n$ but that (iv) fails. We will show that this occurs with probability $o(1)$. Observe that for each $n \in [x,2x)$ and each $L \in \{L_1, \dots, L_\kappa\}$, the integer $L(n)$ is smaller than $3vx$.

We start by bounding the number of  $p \leq 3vx$ which split completely in $K_\ell$ for some $\ell > x^{1/3}$ with $\ell \neq p$.  In that case, $(\Z/\ell\Z)^2$ sits inside $E(\F_p)$, and so
\begin{equation}\label{eq:firstdiv} \ell^2 \mid p+1-a_p. \end{equation}
Since $\Q(\zeta_l)\subset K_\ell$ (by properties of the Weil pairing \cite[Corollary 8.1.1]{silverman09}),
\begin{equation}\label{eq:2nddiv} \ell \mid p-1. \end{equation}
Comparing \eqref{eq:firstdiv} and \eqref{eq:2nddiv} shows that $\ell \mid 2-a_p$. If $a_p \neq 2$, then
\[ 0 < |2-a_p| < 2+2\sqrt{3vx} < x^{2/3} < \ell^2; \]
hence $\ell$ is uniquely determined by $a=a_p$, as the largest prime dividing $|2-a|$. Fixing $a\ne 2$, \eqref{eq:firstdiv} shows that the number of corresponding $p\le 3vx$ is $\ll \frac{x}{\ell^2} + 1 \ll x^{1/3}$. By the Hasse bound, $|a| \ll \sqrt{x}$, and so summing on the possible values of $a$ shows that $O(x^{5/6})$ values of $p$ arise in this way. On the other hand, when $a=2$,  \eqref{eq:sufficetotest} and \eqref{eq:firstdiv} imply that the number of corresponding $p$ is
\[ \ll  \sum_{\ell \in (x^{1/3}, \sqrt{3vx}+1]} \left(\frac{x}{\ell^2}+1\right) \ll x^{2/3}. \]
So there are a total of $O(x^{5/6})$ of these primes $p$.

Since (i)--(iii) hold while (iv) fails, there is an $L \in \{L_1, \dots, L_\kappa\}$ such that $p=L(n)$ is among the primes counted in the previous paragraph. There are $O_m(1)$ possibilities for $L$, and so $O_m(x^{5/6})$ possibilities for $n\in\A(x)$. From (ii) and Proposition \ref{prop:maynard}(1), the probability mass at each such $n$ is $O_m(x^{-1} (\log{x})^k)$. So the probability that (i)--(iii) hold but (iv) fails is $O_m(x^{-1/6} (\log{x})^k)$, which is $o(1)$ as $x\to\infty$.

To complete the proof, we show the probability (i)--(iv) hold but (v) fails is also $o(1)$.

Suppose $p=L(n)$ is prime, with $L \in \{L_1, \dots, L_\kappa\}$, but that $E(\F_p)$ is not cyclic. From Lemma \ref{lem:HB2}(1) and our choice of $\Ll$, $p$ is inert in $M$, and a fortiori does not split completely in $K_2$. So $\E(\F_p)$ must split completely in $K_\ell$ for some $\ell > 2$. Since $\ell \mid p-1 = T \cdot \tilde{L}(n)$, (ii) and (iv) imply that
\begin{equation}\label{eq:lrange} x^{c/(k^3\log{k})} \le \ell \leq x^{1/3}. \end{equation}

We now count how many $p \leq 3vx$ split completely in $K_\ell$ for some $\ell$ in the range \eqref{eq:lrange} with $\ell\neq p$. Making the same appeal to Proposition \ref{prop:maynard}(1) we saw earlier in the proof, it is enough to prove that the number of these $p$ is
\begin{equation}\label{eq:suffices} \ll x^{1-\frac{c}{k^3\log{k}}} + x^{5/6} \log{x}. \end{equation}

 We invoke GRH. By effective Chebotarev, the number of $p\leq 3vx$ splitting completely in $K_\ell$ is
\[ \ll \frac{x}{[K_\ell:\Q] \log{x}} + x^{1/2}\left(\log{x}+ \frac{1}{[K_\ell:\Q]} \log|\Delta_{K_\ell}|\right). \]
Since every prime dividing $\Delta_{K_\ell}$ divides $\ell \cdot \Delta_E$, \eqref{eq:hensel} shows that this upper bound is
\begin{equation}\label{eq:upper2} \ll \frac{x}{[K_\ell:\Q] \log{x}} + x^{1/2} \log([K_\ell:\Q]\cdot  \ell x ). \end{equation}
If $E$ has CM, then for all large primes $\ell$, the degree of $K_\ell/\Q$ is either $2(\ell-1)^2$ or $2(\ell^2-1)$, according to whether or not $\ell$ splits in the CM field. In particular, $[K_\ell:\Q] \asymp \ell^2$. If the non-CM case, we have $[K_\ell:\Q]= \#\mathrm{GL}_2(\Z/\ell\Z) \asymp \ell^4$ for all large primes $\ell$. (These results are due to Serre \cite{serre72}; see \cite[Theorem 18]{CCS13} for a detailed discussion of the CM case.) Thus, the sum of \eqref{eq:upper2} over the range \eqref{eq:lrange} is
\[ \ll \frac{x}{\log{x}} \sum\frac{1}{\ell^2} + x^{1/2}\log{x}\sum_{\ell} 1 \ll x^{1-\frac{c}{k^3\log{k}}} + x^{5/6}\log{x}, \]
which agrees with \eqref{eq:suffices}. This completes the proof in the GRH case.

\subsection{Unconditional proof in the CM case} As already mentioned in the introduction, we will deal entirely with supersingular primes in this part of the proof.

Suppose that $p\ge 5$ is supersingular but that $E(\F_p)$ is not cyclic. Choose an $\ell \neq p$ for which $p$ splits completely in $K_\ell$. Then $\ell^2 \mid \#E(\F_p) = p+1$ and $\ell \mid p-1$, forcing $\ell=2$. Consequently, $p$ splits in the quadratic or abelian cubic subfield $M$ of $K_2$.

Let $F$ be the CM field. We look for primes $p$ of good reduction that are inert in $F$ --- guaranteeing that $p$ is supersingular --- and inert in $M$. If $F=M$, it is clear that there are infinitely many such primes; otherwise, this follows from the linear disjointness of $F$ and $M$ over $\Q$. Since $F/\Q$ and $M/\Q$ are abelian, the set of such $p$ contains all primes in a certain arithmetic progression modulo $q:=f_1 f_2$, where $f_1=\mathfrak{f}(F/\Q)$ and $f_2=\mathfrak{f}(M/\Q)$. Since $E$ is defined over $\Q$, its CM field $F$ must be one of the nine imaginary quadratic fields of class number $1$ (see, e.g., Serre's chapter in \cite{CF86}), and so $f_1 \ll 1$. On the other hand, since every odd prime dividing $f_2$ divides $\Delta_E$, and since $f_2$ is squarefree apart from bounded powers of $2$ and $3$, the modulus $q = f_1 f_2 \ll f_2 \ll \rad(\Delta_E)$.

Corollary 3 of \cite{BFTB14} asserts that for any fixed coprime progression mod $q$, there are infinitely many tuples of $m$ consecutive primes $p_1 < p_2 < \dots < p_m$ with $p_m - p_1 \ll_{q,m} 1$. In fact, it is straightforward to modify their argument to get an upper bound of $q \exp(O(m))$ (cf.  \cite[Theorem 2(2)]{thorner14} when $m=2$). The theorem follows.

\begin{remark} In the non-CM case, we do not have an unconditional bounded gaps result for primes $p$ with $E(\F_p)$ cyclic. But if `cyclic' is replaced by `has an element of order $> p^{3/4-\epsilon}$', then such a result  follows quickly from work of Duke \cite{duke03}.

Let $E/\Q$ be any elliptic curve. (No assumption on the rational torsion is needed here.) For each prime $p$ of good reduction, write $E(\F_p) \cong \Z/d_p\Z \oplus \Z/e_p\Z$ for natural numbers $d_p$ and $e_p$ where $d_p \mid e_p$. Clearly, $d_p^2 \le \#E(\F_p) \leq (\sqrt{p}+1)^2$, so that $d_p \leq 2\sqrt{p}$.

Duke shows (see \cite[eq. (8)]{duke03}) that for each $n \le 2\sqrt{x}$, the number of $p \leq x$ for which $n \mid d_p$ is $O(x^{3/2} n^{-3})$. A fortiori, the same bound holds for how often $d_p=n$. Consequently, the number of $p\le x$ with $d_p > x^{1/4+\epsilon/2}$ is $O(x^{1-\epsilon})$. Whenever $d_p \leq x^{1/4+\epsilon/2}$, the group $E(\F_p)$ has an element of order
\[ e_p \ge \frac{\#E(\F_p)}{x^{1/4+\epsilon/2}} \gg p x^{-\frac14-\frac{\epsilon}{2}}. \]
Summing dyadically, we conclude that $E(\F_p)$ has an element of order $> p^{\frac{3}{4}-\epsilon}$ for all but $O_\epsilon(x^{1-\epsilon})$ primes $p \le x$. This exceptional set is so sparse that it follows immediately from Maynard's lower bound results (see \cite[Theorem 3.1]{maynard14d}) that the set of nonexceptional $p$ has bounded gaps. More precisely, this set contains arbitrarily long runs of primes contained in bounded length intervals.\end{remark}

\section{Proof of Theorem \ref{thm:CM}}
We begin by stating a variant of Proposition \ref{prop:maynard} for sets of primes described by Chebotarev conditions.

\begin{prop}\label{prop:maynard2} Let $K/\Q$ be a Galois extension, and let $\C$ be a fixed conjugacy class of $\Gal(K/\Q)$. Let \[ \Pp(\C)= \{p: p \nmid \Delta_K, \legq{K/\Q}{p} = \C\}.  \] Suppose $a_1 < a_2 < \dots < a_\kappa$ are odd integers for which the $k=2\kappa$ linear functions
\begin{multline}\label{eq:simpleadmissible} L_1(n) = 2n+a_1, \quad L_2(n) = 2n+a_2, \quad\dots,\quad L_\kappa(n) = 2n+a_\kappa, \\
\tilde{L}_1(n) = n+\frac{a_1-1}{2}, \quad \tilde{L}_2(n) = n+\frac{a_2-1}{2}, \quad\dots,\quad \tilde{L}_\kappa(n) = n+\frac{a_\kappa-1}{2}
\end{multline}
form an admissible collection; call this collection $\Ll$.
Suppose that $x$ is sufficiently large, $x > x_0(K,\Ll)$. There is a probability measure on $\A(x)= \{n \in \Z: x \leq n < 2x\}$
with all of the following properties:
\begin{enumerate}
\item[\normalfont\rmfamily{(1)}] The probability mass at any single $n \in \A(x)$ is \[ \ll_{K} x^{-1} (\log{x})^k \left(\prod_{i=1}^{k} \prod_{\substack{p \mid L_i(n) \\ p \nmid 2 \Delta_K}} 4\right) \exp(O(k\log{k})). \]
\item[\normalfont\rmfamily{(2)}] For each $L \in \Ll$,
\[ \Prob(L(n)\text{ belongs to $\Pp(\C)$})\gg_K \frac{\log{k}}{k}. \]
\item[\normalfont\rmfamily{(3)}] Let $\rho \in [k\frac{(\log\log{x})^2}{\log{x}}, \frac{1}{30 [K:\Q]}]$. For each $L \in \Ll$,
    \[ \E\bigg[\sum_{\substack{p \mid L(n) \\ p \leq x^{\rho}, ~p\nmid 2\Delta_K}} 1\bigg] \ll \rho^2 k^4 (\log{k})^2. \]
\end{enumerate}
The implied constant in (3) is absolute.
\end{prop}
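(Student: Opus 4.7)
The proof strategy mirrors that of Proposition~\ref{prop:maynard}: deduce the statement from Maynard's Proposition~6.1 of \cite{maynard14d}, applied with the set of all primes replaced by $\Pp(\C)$. In Maynard's notation, I would take $\A = \N$, let $\Ll$ be the admissible collection in \eqref{eq:simpleadmissible}, set his set of primes to be $\Pp(\C)$, choose the auxiliary modulus $B = \prod_{\ell \mid 2\Delta_K} \ell$ so that $B$ absorbs all primes where ramification or other exceptional behavior can arise, and set the density parameter $\alpha = \#\C/[K:\Q]$ (the Chebotarev density of $\Pp(\C)$ among primes). The desired probability measure is then $\Prob(\{n\}) = w_n/\sum_{m \in \A(x)} w_m$, where the $w_n$ are Maynard's sieve weights associated to $\Ll$.

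The critical analytic input is a Bombieri--Vinogradov-type theorem for the Chebotarev set $\Pp(\C)$: for each $A > 0$ there is a positive level of distribution $\theta = \theta(K)$, which can be taken at least as large as an absolute constant divided by $[K:\Q]$, such that
\[
\sum_{\substack{q \leq x^\theta \\ (q,B)=1}} \sup_{(a,q)=1} \left| \#\{p \leq x : p \in \Pp(\C),~p \equiv a \pmod{q}\} - \alpha \cdot \frac{\Li(x)}{\phi(q)} \right| \ll_{K,A} \frac{x}{(\log x)^A}.
\]
Such an estimate is available unconditionally through standard Bombieri--Vinogradov variants for Chebotarev sets. The lower bound $\theta \gg 1/[K:\Q]$ reflects the fact that Chebotarev conditions modulo $q$ are detected via the composite field $K \cdot \Q(\zeta_q)$, whose degree grows like $[K:\Q]\phi(q)$ and whose discriminant is controllable in terms of $\Delta_K$ and $q$. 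This $[K:\Q]$-dependence of $\theta$ is what determines the upper endpoint $\tfrac{1}{30[K:\Q]}$ of the admissible range for $\rho$ in part~(3), since Maynard's argument bounding small prime contributions to $w_n$ requires $\rho$ to be smaller than a fixed absolute fraction of $\theta$.

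With that input in hand, the three conclusions follow by direct translation of Maynard's estimates, just as in the proof sketch of Proposition~\ref{prop:maynard}. Part~(1) combines the pointwise upper bound on $w_n$ from Maynard's sieve with a lower bound $\sum_m w_m \gg_K \alpha \cdot x (\log x)^k I_k(F)$ and the estimate $I_k(F) \gg (\log k/k)^k$; the product of $4$'s over prime divisors arises from the structure of the weights, and primes dividing $2\Delta_K$ drop out because they are absorbed into $B$. Part~(2) follows from Maynard's Proposition~6.1(1,2) together with $J_k/I_k \gg \log k/k$; the factor $\alpha$ appears in both the numerator and denominator of the relevant ratio, so it contributes only to the $K$-dependence of the implied constant, not to the $k$-dependence. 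Part~(3) is the analogue of Maynard's estimate on small prime contributions; its implied constant is absolute because $\alpha$ again cancels in the relevant ratio, and primes $p \mid 2\Delta_K$ are simply excluded from the sum.

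The main obstacle is securing the Bombieri--Vinogradov estimate for $\Pp(\C)$ with level of distribution proportional to $1/[K:\Q]$; once that is in hand, the rest is careful bookkeeping of the density factor $\alpha$ through Maynard's estimates. A secondary subtlety is that a value $L(n)$ may be prime without lying in $\Pp(\C)$ (if it divides $2\Delta_K$), but such exceptional $n$ contribute $O_K(1)$ per residue class and are readily absorbed into $K$-dependent constants.
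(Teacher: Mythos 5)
Your proposal is correct and follows essentially the same route as the paper: both deduce the result from Maynard's Proposition~6.1 applied with $\curly{P}=\Pp(\C)$ and $B$ supported on the primes dividing $2\Delta_K$, with the key unconditional input being a Bombieri--Vinogradov theorem for Chebotarev sets with level of distribution $\asymp 1/[K:\Q]$ --- the paper names this as the Murty--Murty variant \cite{MM87}, which gives any level below $\min\{\tfrac12,\tfrac{2}{[K:\Q]}\}$ for moduli coprime to $\Delta_K$, exactly matching your description. The only cosmetic difference is that the paper takes Maynard's parameter $\alpha=1$ and lets the Chebotarev density enter through $\#\curly{P}_{L,\A}(x)$ itself (absorbed into the $K$-dependent constants), rather than setting $\alpha=\#\C/[K:\Q]$, but this does not affect the argument.
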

\begin{proof}[Proof (sketch)] The main technical input is supplied by a variant of the Bombieri--Vinogradov theorem due to Murty and Murty \cite{MM87}, which asserts that $\Pp(\C)$ has level of distribution $\theta$ for any fixed \[ \theta < \min\{\frac{1}{2}, \frac{2}{[K:\Q]}\};\] here the moduli of the arithmetic progressions are assumed coprime to $\Delta_K$. We now argue as in the proof of Proposition \ref{prop:maynard}. Specifically, the Murty--Murty theorem allows us to apply \cite[Proposition 6.1]{maynard14d} with $\A=\N$, $\Ll$ as given, $\curly{P}= \Pp(\C,K)$, $B=2 \Delta_K$, $\theta = \min\{\frac{1}{3}, \frac{1}{[K:\Q]}\}$, and $\alpha=1$. Defining the probability mass at $n$ as $w(n)/\sum_{n \in \A(x)} w(n)$, the result follows. (For similar applications of the Murty--Murty theorem, see \cite{thorner14} and \cite[Theorem 3.5]{maynard14d}.)
\end{proof}

The proof of Theorem \ref{thm:CM} also uses the following criterion, which is contained in work of Cojocaru \cite[Lemmas 2.2 and 2.3]{cojocaru03}.

\begin{lem}\label{lem:cojocaru} Suppose that $E/\Q$ has CM by an order in the imaginary quadratic field $F$. Let $p$ be a prime of good ordinary reduction, and let $\ell$ be a prime with $\ell \neq p$. If $p$ splits completely in $K_\ell$, then there is a $\pi \in \Z_F$ with $\pi \equiv 1\pmod{\ell}$ and $N(\pi)=p$.\end{lem}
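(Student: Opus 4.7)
The plan is to invoke the classical CM description of Frobenius at an ordinary reduction and reinterpret the splitting hypothesis on $K_\ell$ as a congruence on this Frobenius. Let $\mathcal{O} \subseteq \Z_F$ denote the CM order of $E$. Since $p$ is a prime of good ordinary reduction, Deuring's theory identifies the Frobenius endomorphism of $\tilde{E}/\F_p$ with an element $\pi \in \mathrm{End}(\tilde{E}) \subseteq \Z_F$ satisfying $\pi \bar{\pi} = p$ (cf.\ \cite[Chap.\ 13]{lang87}; the key point is that for an ordinary reduction $\mathrm{End}(\tilde{E})$ is an order in the imaginary quadratic field $F$, so it injects into $\Z_F$). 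In particular $\pi \in \Z_F$ and $N_{F/\Q}(\pi) = p$, so it remains only to verify the congruence $\pi \equiv 1 \pmod{\ell}$.

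To that end, I would translate the splitting hypothesis. Because $\ell \neq p$ and $E$ has good reduction at $p$, the reduction map yields a decomposition-group-equivariant isomorphism $E[\ell] \xrightarrow{\sim} \tilde{E}(\overline{\F}_p)[\ell]$. The assumption that $p$ splits completely in $K_\ell = \Q(E[\ell])$ says that $\Frob_p$ acts trivially on $E[\ell]$; under the isomorphism, this transfers to triviality of the geometric Frobenius of $\tilde{E}$, i.e., of the endomorphism $\pi$, on $\tilde{E}[\ell]$. Equivalently, the endomorphism $\pi - 1$ annihilates $\tilde{E}[\ell]$.

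I would then conclude via a standard factorization property of isogenies: any endomorphism of an elliptic curve that vanishes on the full $\ell$-torsion factors through multiplication by $\ell$ (see, e.g., \cite[Cor.\ III.4.11]{silverman09}). Hence $\pi - 1 = \ell \cdot \alpha$ for some $\alpha \in \mathrm{End}(\tilde{E}) \subseteq \Z_F$, which gives $\pi \equiv 1 \pmod{\ell}$ in $\Z_F$, as required. I do not anticipate a serious obstacle: the one delicate bookkeeping point is retaining the inclusion $\mathrm{End}(\tilde{E}) \subseteq \Z_F$ (rather than merely in the possibly smaller order $\mathcal{O}$), which is guaranteed by the hypothesis of ordinary reduction and ensures that the congruence lives in $\Z_F$ as the statement requires.
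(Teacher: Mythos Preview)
Your argument is correct and is essentially the standard one: the paper does not give its own proof but simply cites \cite[Lemmas 2.2 and 2.3]{cojocaru03}, and Cojocaru's proof proceeds in exactly the way you outline---identify the Frobenius of the reduced curve with an element $\pi\in\Z_F$ of norm $p$ via Deuring's theory, use the good-reduction isomorphism $E[\ell]\cong\tilde E[\ell]$ (valid since $\ell\neq p$) to convert complete splitting in $K_\ell$ into $\pi-1$ killing $\tilde E[\ell]$, and then factor $\pi-1$ through $[\ell]$. Your bookkeeping remark that $\mathrm{End}(\tilde E)$ is an order in $F$ and hence sits inside $\Z_F$ is the right way to ensure the congruence lands in $\Z_F$.
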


\begin{proof}[Proof of Theorem \ref{thm:CM}] We begin by specifying the parameters needed for our application of Proposition \ref{prop:maynard2}.

Let $F$ be the CM field of $E$. Let $\Qq$ be the set of primes dividing $2\Delta_F \Delta_E$, and let $K$ be the compositum of $F$ and all of the fields $K_\ell:=\Q(E[\ell])$ for $\ell \in \Qq$. Then $K/\Q$ is Galois and every prime dividing $\Delta_K$ belongs to $\Qq$. Choose a conjugacy class $\C$ of $\Gal(K/\Q)$ where every prime $p \in \Pp(\C)$ is such that
\begin{itemize}
\item $p$ splits in $F$,
\item $p$ does not split completely in any of the fields $K_\ell$ with $\ell \in \Qq$.
\end{itemize}
Any large prime $p$ of ordinary reduction for which $E(\F_p)$ is cyclic satisfies both of these conditions, and so such a $\C$ must exist.

Let $\kappa = \lceil\exp(C_K m)\rceil$, where $C_K$ is a sufficiently large constant depending on $K$. Mimicking  the proof of Lemma \ref{lem:admissible}, we can choose odd integers $a_1 < \dots < a_{\kappa}$ for which \eqref{eq:simpleadmissible} is admissible, with $a_{\kappa} - a_1 \leq (2\kappa)^{C_8}$. Then \begin{equation}\label{eq:finalgapbound} a_{\kappa}-a_1 \ll \exp(O_E(m)).\end{equation}

We are now in a position to apply Proposition \ref{prop:maynard2}. If $C_K$ is sufficiently large and $c$ is sufficiently small (both allowed to depend on $K$), then an $n\in \A(x)$ satisfies both of the following conditions with probability $\gg_m 1$:
\begin{enumerate}
\item[(i)] at least $m$ of $L_1(n), \dots, L_\kappa(n)$ belong to $\Pp(\C)$,
\item[(ii)] whenever a prime $\ell \leq x^{c/(k^3\log{k})}$ divides $\prod_{i=1}^{\kappa}L_i(n) \tilde{L}_i(n)$,  $\ell$ also divides $2\Delta_K$.
\end{enumerate}
Indeed, this follows from arguments seen already in the proofs of Theorems  \ref{thm:main} and \ref{thm:EC}, the only difference being that we appeal to Proposition \ref{prop:maynard2} instead of Proposition \ref{prop:maynard}.  We now introduce the statement
\begin{enumerate}
\item[(iii)] Whenever $p = L(n) \in \Pp(\C)$, with $L \in \{L_1, \dots, L_\kappa\}$, the group $E(\F_p)$ is cyclic.
\end{enumerate}
We will show that the probability (i) and (ii) hold but (iii) fails is $o(1)$, as $x\to\infty$, so that (i)--(iii) hold with positive probability for all large $x$. This will complete the proof; indeed, if $n\in \A(x)$ satisfies (i)--(iii), and $p_1 < p_2 < \dots < p_m$ are primes from $\Pp(\C)$ drawn from $\{L_1(n), \dots, L_\kappa(n)\}$, then the claimed bound on $p_m-p_1$ follows from \eqref{eq:finalgapbound}, while the fact that each of the primes is of good ordinary reduction follows from the choice of $\C$.

Suppose (i) and (ii) hold and that $p = L_i(n) \in \Pp(\C)$, where $i\in \{1,2,\dots, \kappa\}$. As we have just remarked, $p$ is a prime of good ordinary reduction. If $E(\F_p)$ is not cyclic, then $p$ spits completely in $K_\ell$ for some $\ell \neq p$. Then $\ell \mid p-1 = 2\tilde{L}_i(n)$, so that either $\ell \mid 2\Delta_K$ or $\ell \ge x^{c/(k^3\log{k})}$. But if $\ell \mid 2\Delta_K$, then $\ell \in \Qq$, and so the choice of $\C$ guarantees that $p$ does not split completely in $K_\ell$. So it must be that $\ell \ge x^{c/(k^3\log{k})}$. Since $p \leq 5x$ for large $x$, we also have that $\ell \leq \sqrt{5x}+1$, after recalling \eqref{eq:sufficetotest}.

Let us count primes $p\le 5x$ of good ordinary reduction that split completely in $K_\ell$ for some $\ell\ne p$ with
\begin{equation}\label{eq:lrange2}
x^{c/(k^3\log{k})}\le \ell \le \sqrt{5x}+1.
\end{equation}
From Lemma \ref{lem:cojocaru}, there is a  $\pi_p \in \Z_{F}$ with $\pi_p\equiv 1\pmod{\ell}$ and $N(\pi_p)=p$. The number of $\pi \in \Z_F$ with $\pi \equiv 1\pmod{\ell}$ and $N(\pi) \le 5x$ is $O(\frac{x}{\ell^2} + 1)$, by an elementary lattice point counting argument (e.g., see \cite[Lemma 5]{murty83} or \cite[Lemma 2.6]{cojocaru03}). Summing on $\ell$ in the range \eqref{eq:lrange2} shows that the number of $p$ in question is \[ \ll x^{1-\frac{c}{k^3\log{k}}} + x^{1/2}.\]

If $n$ satisfies (i) and (ii), Proposition
\ref{prop:maynard2}(1) shows that the probability mass at $n$ is $O_{K,m}(x^{-1} (\log{x})^k)$. Consequently, the probability that $L(n)$ is one of the primes counted in the preceding paragraph is $o(1)$, as $x\to\infty$.
\end{proof}

\subsection*{Acknowledgments} We are indebted to Pete L. Clark for helpful conversations on the theory of elliptic curves. This research began while the second author enjoyed a very pleasant visit to Brigham Young University. He thanks the BYU mathematics department for their hospitality and the NSF for their support under award DMS-1402268.

\providecommand{\bysame}{\leavevmode\hbox to3em{\hrulefill}\thinspace}
\providecommand{\MR}{\relax\ifhmode\unskip\space\fi MR }
% \MRhref is called by the amsart/book/proc definition of \MR.
\providecommand{\MRhref}[2]{%
  \href{http://www.ams.org/mathscinet-getitem?mr=#1}{#2}
}
\providecommand{\href}[2]{#2}

\end{document}